\documentclass[reqno,10pt]{amsart}

\usepackage{amssymb,amsmath,amsthm,amsfonts}
\usepackage{mathrsfs,dsfont, comment,mathscinet}
\usepackage{enumerate,esint}
\usepackage[colorlinks=true, linkcolor=blue, citecolor=blue, urlcolor=blue]{hyperref}
\usepackage[left=2.8cm,right=2.8cm,top=2.8cm,bottom=2.8cm]{geometry}
\usepackage{mathtools}
\usepackage[pdftex]{color, graphicx}
\usepackage{bbm}
\usepackage[english]{babel}
\usepackage{comment} 
\usepackage{esint} 
\usepackage{pdfsync}
\usepackage{enumitem}
\usepackage{caption}    
\usepackage{epstopdf}
\allowdisplaybreaks

\numberwithin{equation}{section}
\theoremstyle{plain}

\newtheorem{thm}{Theorem}[section]
\newtheorem{prop}[thm]{Proposition}
\newtheorem{lemma}[thm]{Lemma}

\newtheorem{definition}[thm]{Definition}
\theoremstyle{definition}
\newtheorem{remark}[thm]{Remark}

\newcommand{\N}{\mathbb{N}}

\newcommand{\R}{\mathbb{R}}
\newcommand{\e}{\varepsilon}
\newcommand{\dw}{\mathrm{d_W}}

\newcommand{\dx}{\,\mathrm{d}x}
\newcommand{\dy}{\,\mathrm{d}y}
\newcommand{\dif}{\,\mathrm{d}}

\newcommand{\ho}{{\mathcal H}^0}

\newcommand{\restr}{%
  \,\raisebox{-.127ex}{\reflectbox{\rotatebox[origin=br]{-90}{$\lnot$}}}\,%
}

\definecolor{pinegreen}{rgb}{0.0, 0.47, 0.44} 

\title{Quasi-crystalline phase separation in one dimension}
\author{Riccardo Cristoferi}
\address{Department of Mathematics, IMAPP, Radboud University, Heyendaalseweg, 6525 AJ 
Nijmegen, The Netherlands}
\email{cristoferi@science.ru.nl}

\author{Lorenza D'Elia}
\address{Institute of Analysis and Scientific Computing, TU Wien, Wiedner Hauptstraße 8-10, 1040
Vienna, Austria}
\email{lorenza.delia@tuwien.ac.at}
\date{}
\keywords{quasi-crystalline materials, almost periodic functions, phase separation, $\Gamma$-convergence}
\subjclass[2020]{49J45, 35B27, 35B25, 82B26}

\begin{document}

\begin{abstract}
We investigate a Modica–Mortola energy in heterogeneous materials with a quasi-crystalline microstructure, described by a suitable notion of almost periodicity. In the regime where phase separation occurs at a finer scale than homogenization, we establish compactness and $\Gamma$-convergence results in one dimension for vector-valued phases. The main novelty lies in the explicit characterization of the effective surface energy, which reveals the interplay between phase separation and homogenization.
In particular, we prove that heterogeneities can promote the splitting of a phase separation into several energetically favorable microscopic transitions.
\end{abstract}

\maketitle


\section{Introduction}

The aim of this paper is to initiate the study of variational models arising in the gradient theory of liquid–liquid phase transitions in composite materials with small-scale quasi-crystalline microstructure.
The main goal is to rigorously understand how a quasi-crystalline material can influence both the location and the energetic pathway of a phase transition.

Quasi-crystals are ordered structures showing a forbidden symmetry for periodic structures, such as $5$-, $7$-, $8$- and $10$-fold symmetries. Although the  discovery of quasi-crystals \cite{LS84,SBDG}  has been welcomed with scepticism by the scientific community, quasi-crystalline materials  have drawn wide attention in recent years thanks to their unique  proprieties: 
low electrical and thermal conductivity, low surface energy, low coefficient of friction, high hardness, as well as high brittleness. All these proprieties make quasi-crystals particularly attractive in many active fields of engineering as  photonic quasi-crystals \cite{VNA13}, catalysis \cite{TKT06} and optics \cite{AATAKE22}.
 The research of production methods of materials forming a  quasi-crystalline phase has attracted extensive attention in many field of engineering. Among such materials, aluminium-copper-iron alloys (see \cite{HS04} for an overview) arouse a great interest  due to the lack of toxicity and the easy availability of the alloying elements and they find  application in many areas such as coating materials \cite{FGSLB18} and catalysis \cite{TKT06}.  
Although many quasi-crystals, like aluminium alloys,  are synthetically obtained,  scientists have recently also found them available in nature \cite{BYal15}. 

An analytical approach to the study of quasi-crystalline material relies on the notion of \emph{quasi-periodic functions}, representing the very first generalization of the notion of periodicity. Such a-periodic functions can be defined as a section of a periodic map of more variables and are also characterized by Fourier series with finitely many frequencies. The closure of such a class with respect to uniform norm gives rise to a broader one, the so-called \emph{almost periodic functions}. The theory of this class has been developed by Bohr \cite{Bohr25} at the beginning of the last century. Starting from the seminal paper \cite{Bohr25}, several generalizations have been proposed,  each of which can be obtained as  closure of the set of trigonometric polynomials with respect to some suitable metric (see, e.g.,  \cite{B32, St25}).
Beyond the harmonic analysis, representing the very first field to have been investigated, almost periodic functions play a crucial role in the theory of partial differential equations \cite{LZ98}, in celestial mechanics \cite{F74}, as well as in statistical mechanics \cite{CBL98}.

Now, we give a brief overview of the mathematical analysis of the gradient theory of phase separation in the general framework.
Let $\Omega\subset\R^N$ be an open bounded set with Lipschitz boundary, and let $M\geq 1$.
We consider the model by Van der Walls \cite{VdW}, Cahn and Hillard  \cite{CH58} for homogeneous materials
\[
G_\varepsilon(u) \coloneqq \int_\Omega \left[ \frac{1}{\varepsilon}W(u(x)) + \varepsilon|\nabla u(x)|^2 \right] \mathrm{d} x.
\]
Here, $u\in H^1(\Omega;\R^M)$ is the phase variable, $W: \R^M\to[0,\infty)$ is the potential vanishing at the wells $\{z_1, \dots, z_k\}$, which represents the stable phases.
The parameter $\varepsilon>0$ is related to the scale of the transition region between bulk regions where the phase variable is close to the stable phases $\{z_1, \dots, z_k\}$, and it is usually very small.
The main focus of the analysis is in rigorously understanding of the behavior of the functional $F_\varepsilon$ when $\varepsilon$ is very small by using the language of $\Gamma$-convergence (for an introduction on this topic see \cite{Dalmasobook}).
It has been proved by several authors (see \cite{baldo}, \cite{FonsecaTartar}, \cite{Gurtin}, \cite{Stern}) that the limiting functional is of the form
\begin{equation}\label{eq:F_00_homog}
G_0(u)\coloneqq \sum_{i\neq j=1}^k \sigma_{ij} \,\mathcal{H}^{N-1}(J^{ij}_u\cap \Omega),
\end{equation}
for functions $u\in BV(\Omega;\{z_1,\dots,z_k\})$, namely functions of bounded variations taking values only in $\{z_1,\dots,z_k\}$, and is equal to $+\infty$ else in $L^1(\Omega;\R^M)$.
Here, $\mathcal{H}^{N-1}$ denotes the $(N-1)$-dimensional Hausdorff measure, $J^{ij}_u$ the set of points where $u$ jumps from $z_i$ to $z_j$, and the limiting surface energy density $\sigma_{ij}>0$ is given by
\begin{equation}\label{eq:sigma_intro}
\sigma_{ij}\coloneqq \inf\left\{\int_{-1}^1 2\sqrt{W(\gamma(t))}|\gamma'(t)|\mathrm{d} t \,:\, \gamma\in W^{1,1}([-1,1];\R^M), \gamma(-1)=z_i, \gamma(1)=z_j \right\}.
\end{equation}
This latter represents the optimal way to transition from the phase $z_i$ and the phase $z_j$, and is given by a degenerate geodesic problem.

In the case the material has a periodic microstructure, the functional $G_\varepsilon$ can be modified as follows
\begin{equation}\label{eq:F_periodic}
H_{\e,\delta}(u)\coloneqq \int_\Omega \left[ \frac{1}{\e} W\left(\frac{x}{\delta}, u(x) \right) + \e|\nabla u(x)|^2 \right] \dx,
\end{equation}
where now $W:\R^N\times\R^M\to[0,\infty)$ is $[0,1)^N$-periodic in the first variable.
The interaction between the periodic homogenization and the phase separation leads to considering the three regimes 
\[
\varepsilon\ll\delta, \;\;\;\; \e\sim\delta, \;\;\;\; \delta\ll \varepsilon.
\]
In all three cases, the limiting energy density is of the form (see \cite{CFG_moving_sub}, \cite{CFG_fixed_sub}, \cite{CFG_fixed_sup},  \cite{CriFonHagPop})
\[
H_{0,0}(u)\coloneqq \sum_{i\neq j=1}^k\int_{J^{ij}_u\cap \Omega} \sigma_{ij}(\nu(x)) \mathrm{d} \mathcal{H}^{N-1}(x),
\]
where $\nu(x)\in\mathbb{S}^{N-1}$ is the measure theoretic outer normal at the point $x$, and the function $\nu\mapsto \sigma_{ij}(\nu)$ depends on the regime under consideration, and generalises the surface energy density in \eqref{eq:sigma_intro}.\\

\par  In this manuscript, we adopt the model \eqref{eq:F_periodic} in the one-dimensional case, namely when $\Omega$ is an open bounded interval of the form $(a,b)$, with $a<b$. 
For $u\in H^1(\Omega;\R^M)$ we define
\begin{equation}
         \notag
         F_{\e,\delta}(u)\coloneqq \int_\Omega \left[ \frac{1}{\e} W\left(\frac{x}{\delta}, u(x) \right) + \e|u'(x)|^2 \right] \dx,
     \end{equation}
where now the potential $W:\R\times\R^M\to[0,\infty)$ describes a quasi-crystalline material by using the framework of a generalization of Bohr almost periodic functions (for the precise assumptions, see Section \ref{sec:mainresult}).
A possible physical motivation for the energies considered in this paper is the phase separation of multi-component liquid mixtures confined in heterogeneous porous materials. The influence of pore geometry and topology on the phase separation of binary mixtures has been investigated in \cite{ShimizuTanaka2017}, while recent experimental developments have demonstrated the possibility of constructing porous materials with prescribed quasi-crystalline microstructures \cite{ChenAroraXiangEtAl2026}.
These observations motivate the investigation of phase-transition models in materials with quasi-crystalline heterogeneities. In our framework, the underlying microstructure is encoded through the spatial dependence of an almost-periodic potential. The assumption of spatially independent wells corresponds to a situation in which the preferred equilibrium phases remain unchanged throughout the material, while the energetic cost of transitions between them depends on the local microstructure. For multi-component mixtures, such heterogeneities may affect different transition pathways in distinct ways, naturally motivating the study of vector-valued order parameters and general, non multiplicative multi-well potentials.

As for the case of periodic materials, we expect the asymptotic behavior of the functional $F_{\e,\delta}$ to depend on the regime
\[
\varepsilon\ll\delta, \;\;\;\; \e\sim\delta, \;\;\;\; \delta\ll \varepsilon.
\]
Here, we focus on the first regime, namely when the scale of the heterogeneities is larger than that at which phase separation occurs. The other regimes are currently under investigation.
Restricting our analysis to the one-dimensional setting allows us to treat vector-valued phases with a general potential $W$. Indeed, in higher dimensions, even in the periodic setting, existing results require the potential $W$ to have a multiplicative structure and are restricted to the scalar case $M=1$ (see \cite{CFG_fixed_sub}).

The main results of the present paper are the compactness (see Theorem \ref{thm:compactness}) and the characterization in the language of $\Gamma$-convergence of the effective surface energy $F_{0,0}$ in the regime $\varepsilon\ll\delta$ (see Theorem \ref{thm:main}).
The latter represents the first novelty of this contribution. We show that (compare with \eqref{eq:F_00_homog})
	\begin{equation}
		\notag
		F_{0,0}(u)\coloneqq \sum_{i\neq j =1}^k \sigma_{ij} \ho(J^{ij}_u \cap \Omega),
	\end{equation}
	for functions $u\in BV(\Omega;\{z_1,\dots, z_k\})$, namely piecewise constant with taking values in $\{z_1,\dots, z_k\}$.
    Here, $J^{ij}_u$ represents the points where $u$ jumps from $z_i$ to $z_j$, and
    \[
    \sigma_{ij}\coloneqq \inf\left\{ \int_{J_w} \dw(x; w^{-}(x), w^{+}(x)) \dif{\mathcal H}^0(x) 
        \,:\, w\in \mathcal{A}_{i,j}  \right\},
    \]
    where $\dw(x; w^{-}(x), w^{+}(x))$ is the geodesic distance of the potential $W(x,\cdot)$ (see \eqref{eq:sigma_intro}), and $\mathcal{A}_{i,j}$ is the admissible class of competitors taking values in $\{z_1,\dots, z_k\}$ and equal to the jump function
    \[
    \widetilde{w} (x) \coloneqq
    \begin{cases}
        z_i & x\leq0,\\
        z_j & x> 0,
    \end{cases}
    \]
    outside of a bounded interval (see Definition \ref{def:limiting_density} for more details).
    While explicit characterizations of the effective surface energy have previously been obtained in the periodic setting for scalar-valued phases and potentials with a multiplicative structure (see \cite{CFG_fixed_sub}), our analysis allows for general multiwell potentials and vector-valued phases. Moreover, we establish these results in the broader framework of Bohr almost-periodic media, where the lack of a periodic unfolding operator requires a substantially different approach to the liminf inequality.
    In particular, the explicit characterization of the limiting surface energy density $\sigma_{ij}$ provides a clear interpretation of the interplay between the geometry of the underlying material and the phase separation phenomenon.
First, we observe a \emph{separation of scales}: the limiting functional $F_{0,0}$ coincides with the one obtained by first letting $\varepsilon\to 0$, corresponding to phase separation, and subsequently letting $\delta\to 0$, corresponding to homogenization.
In the latter process, at a jump point $x_0\in J^{ij}_u$, rather than considering a single optimal profile associated with the geodesic distance $\dw(x_0; z_i, z_j)$, as in \eqref{eq:sigma_intro}, it may be energetically favorable to consider \emph{multiple optimal profiles at different points} $x\in\R$, each corresponding to the geodesic distance $\dw(x; z_i, z_j)$.

The main technical challenge of the paper lies in establishing the liminf inequality, which involves first passing to the limit as $\varepsilon\to 0$ and subsequently as $\delta\to 0$.
Indeed, previous approaches in the periodic setting rely crucially on the unfolding operator, which is not available in the present framework. This calls for a tailored strategy to establish the liminf inequality. To this end, we introduce an auxiliary potential depending on a small parameter $\eta>0$, which enables us to prove that the limiting surface energy density is independent of the spatial variable. The construction of this auxiliary potential is precisely where the assumption of Bohr almost periodicity plays a fundamental role, as it provides the \emph{pointwise} control needed to describe the behavior of the potential over increasingly large intervals.
Roughly speaking, this allows us to reduce the problem to the study of a Modica–Mortola functional on the whole real line $\R$, without rapid spatial oscillations. We can then invoke recent results on such heterogeneous functionals to derive an effective formula for the auxiliary potential. Finally, letting $\eta\to 0$, we recover the desired limiting surface energy density.

The construction of the recovery sequence requires an adaptation of the arguments of \cite{CriGra} for space-dependent potentials.
In particular, the one dimensionality of the domain simplifies greatly the argument, since we do not need to deal with general sets of finite perimeter.\\

This paper is organized as follows: in Section \ref{sect:setup} we introduce the set-up of the problem and the statements of the main results. In Section \ref{sec:Technical Results} we collect all the techinical results which we will use to prove the main results. In Section \ref{sec:Compactness} we prove the compactness, while the proof of the liminf equality is given in Section \ref{sect:liminf}. The recovery sequence is constructed in Section \ref{sect:limsup}.
Finally, our arguments are stable enough to be applied also to the case where a mass-constraint is imposed and which is treated in Section \ref{sect:mass-constrained}.


\subsection{Literature review}

    The literature on the gradient theory of phase separation is very large.
    Here, we provide a selection of contributions that are closed to the investigation of the present manuscript.
    For a more extensive overview, see the introduction of \cite{CriGra}.
    The case of periodic materials where the oscillation is at the level of the potential has been considered by the first author together with Fonseca, Hagerty, and Popovici in \cite{CriFonHagPop} (see also \cite{Hagerty}) and by the first author together with Fonseca and Ganedi \cite{CFG_fixed_sup} for the case of fixed wells in the regime $\varepsilon\sim\delta$ (see also \cite{Don}), and $\delta\ll\varepsilon$, respectively.
    Moreover, in \cite{CFG_moving_sub} there is a study the regime $\varepsilon\ll\delta$ where the wells of the potential are space dependent.
    
    The case of periodic potentials with multiple scales is considered in the recent contribution \cite{CriPig} by the first author and Pignatelli, and in \cite{Pig} by the latter alone.

    The case where the high oscillations are in both the potential and the singular perturbation has been considered by Morfe and Wagner in \cite{MorWag}, in the regime $\delta\ll\varepsilon$ in the case the potential depends in a multiplicative way from the space variable. In this work, the authors are able to consider the case of quasi-periodic functions.

    In \cite{AnsBraChi1} and \cite{AnsBraChi2}, Ansini, Braides, and Chiad\`{o}-Piat investigated the asymptotic of the energy in the case the oscillations are the level of the singular perturbation in all the three regimes, under some special assumptions.

    Finally, the gradient flow of the energy has been recently considered by the first author together with Ganedi and Pignatelli in \cite{CriGanPig} in the regime $\delta\ll\varepsilon^{3/2}$.
    Moreover, Morfe in \cite{Mor} considered the case where the oscillations are at the level of the singular perturbation in the regime $\varepsilon\ll\delta$.


\section{Set-up of the problem and main result}
\label{sect:setup}

\subsection{Main results}\label{sec:mainresult}
For $a<b$, let $\Omega\coloneqq (a,b)\subset\R$ be a bounded open interval.
We first introduce the terminology for the long-range regularity of the potential.
This is an adaptation of the usual notion of Bohr almost periodicity, where we also impose a uniformity condition on the relatively dense set of almost periods with respect to the second variable. For more details on almost periodic functions see \cite{ABG06, LZ82} and references therein.

\begin{definition}
    A set $X\subset \R$ is said to be \emph{relatively dense} if there exists a number $\ell>0$ such that every interval $[a, a+\ell]$ contains at least one point of $X.$ The number $\ell$ is called the \emph{inclusion interval}.
\end{definition}

\begin{definition} \label{def:Bohralmper}
   We say that the function $f\in C^0(\R\times\R^M; \R)$ is \emph{uniformly almost-periodic} if for any compact set $K\subset\R^M$ and for every $\lambda>0$, there exists a relatively dense set $\{\tau_i\}_{i\in I_\lambda}\subset\R$, where $I_\lambda$ is a set of indexes, such that 
         \begin{equation}
             \notag
             \sup_{x\in\R} |f(x+\tau_i,p)-f(x,p)|<\lambda,
         \end{equation}
    for any $i\in I_\lambda$ and for any $p\in K$. Each $\tau_i$, for $i\in I_\lambda$, is called an \emph{$\lambda$-uniformly almost period} of $f$.
\end{definition}

\begin{remark}
The class of functions introduced in Definition \ref{def:Bohralmper} includes both periodic and quasiperiodic functions, the latter being functions that can be represented as periodic functions in a higher-dimensional space (see, e.g.,  \cite{ABG06}).
\end{remark}

We are now in position to state all of the assumptions that we will require in this manuscript.
Let $W:\R\times\R^M\to[0,\infty)$, with $M\geq 1$ satisfying the following:
    \begin{enumerate}[label=(H\arabic*), ref=(H\arabic*)]
	\item \label{H1} $W$ is uniformly almost-periodic;
	\item \label{H2} There exist $z_1, \dots, z_k\in \R^M$ such that
	\[
	W(x,p)=0 \quad \text{if and only if }\quad p\in\{z_1, \dots z_k\},
	\]
	for a.e. $x\in\R$;
	\item \label{H3} There exists $R>0$ such that
	\[
	W(x,z)\geq \frac{|z|}{R},
	\]
	for any $x\in\R$, and for all $z\in\R^M$ with $|z|\geq R$;
    \item \label{H4} There exists $V: \R^M\to [0, +\infty)$ with $V^{-1}(0)=\{z_1,\dots,z_k\}$ and with at least linear growth at infinity such that
    \[
    W(x, p) \geq V(p)
    \]
    for any $x\in \R$ and for any $p\in\R^M$;
    \item \label{H5} If $M>1$, then there exist a constant $C_1>0$ and a continuous non-decreasing function $f\colon[0,\infty)\to[0,\infty)$ with $f^{-1}(\{0\})=0$ and $f(2t)\leq C_1 f(t)$ for all $t\geq 0$ such that
            \[
            \frac{1}{C_1} f\left(\min\{ |p-z_i| : i=1\dots,k \}\right)
            \leq W(x,p)
                \leq C_1 f\left(\min\{ |p-z_i| : i=1\dots,k \}\right),
            \]
            for a.e. $x\in\Omega$, and all $p\in \R^M$;
	\end{enumerate}

\begin{remark}
Assumptions \ref{H3} and \ref{H4} are standard in order to ensure compactness. The former is to make sure that the potential is uniformly with at most linear growth at infinity.
The latter asks for a non-degeneracy of the potential in the domain $\Omega$.
\end{remark}

\begin{remark}
    Assumption \ref{H5} was considered in \cite{CriDeuPig} as a low regularity assumption on the behavior of the potential close to the wells.
    It is needed in order to ensure that geodesics of the degenerate metric \eqref{eq:distance_dw} have finite Euclidean length. 
    Note that, in the case $M=1$, then the length of the geodesics between $z_i$ and $z_j$ is $|z_i-z_j|$.
\end{remark}

We now introduce the family of functionals that we will study.

\begin{definition}
For $\e,\delta>0$, let $F_{\e,\delta}: L^1(\Omega;\R^M)\to \R\cup \{+\infty\}$ be the functional defined as
     \begin{equation}
         \notag
         F_{\e,\delta}(u)\coloneqq \int_\Omega \left[ \frac{1}{\e} W\left(\frac{x}{\delta}, u(x) \right) + \e|u'(x)|^2 \right] \dx,
     \end{equation}
if $u\in H^1(\Omega;\R^M)$, and as $F_{\e,\delta}(u)\coloneqq+\infty$ otherwise in $L^1(\Omega;\R^M)$.
\end{definition}

We now introduce the limiting functional. 
We first introduce its domain $BV(\Omega;\{z_1,\dots, z_k\})$. For further details on BV spaces see \cite{AFP, L17}.

\begin{definition}\label{def:limiting_density}
Let $\Omega\subset\R$ be a bounded open set.
Let $z_1,\dots, z_k\in\R^M$. We define the space $BV(\Omega;\{z_1,\dots, z_k\})$ as the space of functions $u\in L^1(\Omega;\R^M)$ with $u(x)\in\{z_1,\dots, z_k\}$ for a.e. $x\in\Omega$, and such that, for each $i\in\{1,\dots, k\}$, the set $\{x\in\Omega : u(x)=z_i\}$ is a finite union of open intervals.
\end{definition}

In the following, we will denote by $\mathcal{H}^0$ the zero-dimensional Hausdorff measure. Next, we define the limiting energy density.

\begin{definition}
For $x\in\R$, and $p,q\in\R^M$, we define
\begin{equation}\label{eq:distance_dw}
\dw(x;p,q) \coloneqq \inf\left\{ 2\int_{-1}^1 \sqrt{W(x,\gamma(t))}|\gamma'(t)|\, \dif t \,:\, \gamma\in W^{1,1}([-1,1];\R^M),
    \gamma(-1)=p,\, \gamma(1)=q \right\}.
\end{equation}
For $i\neq j\in\{1,\dots,k\}$, we define
    \[
    \sigma_{ij}\coloneqq \inf\left\{ \int_{J_w} \dw(x; w^{-}(x), w^{+}(x)) \dif{\mathcal H}^0(x) 
        \,:\, w\in \mathcal{A}_{i,j}  \right\},
    \]
where the admissible class $\mathcal{A}_{i,j}$ is defined as 
\begin{equation}
    \notag
    {\mathcal A}_{i,j}\coloneqq \{w\in BV(\R; \{z_1, \dots, z_k\}) : \exists\, \overline{t}, \overline{s}\in\R \;\; \hbox{s.t. } w(z)= z_i\; \hbox{for } z\leq \overline{t}\;\; \hbox{and}\;\; w(z)= z_j\; \hbox{for } z\leq \overline{s}\},
\end{equation}
for all $i\neq j\in\{1,\dots,k\}$.
\end{definition}

\begin{remark}
Note that in the case where the potential $W$ is $[0,1)^N$-periodic in the first variable, we can restrict the class of admissible competitors for $\sigma_{ij}$ to ${\mathcal A}_{i,j}\cap BV(Q; \{z_1, \dots, z_k\})$.
\end{remark}

We are now in position to define the object of interest.

\begin{definition}
Let $F_{0,0}: L^1(\Omega;\R^M)\to \R\cup \{+\infty\}$ be defined as
	\begin{equation}
		\notag
		F_{0,0}(u)\coloneqq \sum_{i\neq j =1}^k \sigma_{ij} \ho(J^{ij}_u \cap \Omega),
	\end{equation}
	if $u\in BV(\Omega;\{z_1,\dots, z_k\})$, and as $F_{0,0}(u)\coloneqq+\infty$ otherwise in $L^1(\Omega;\R^M)$.
    Here, we use the notation
    \[
    J^{ij}_u \coloneqq \{ x\in J_u \,:\, u^-(x)= z_i,\, u^+(x)=z_j \}
    \]
    to denote the points of the jump set of $u$ having left trace $z_i$ and right trace $z_j$.
\end{definition}

\begin{remark}
Note that we do not need a factor $1/2$ in front of the sum in the definition of the functional $F_{0,0}$, since $J^{ij}_u\cap J^{ji}_u=\emptyset$ for $i\neq j$.
\end{remark}

First of all, we state the pre-compactness of sequences with uniformly bounded energy.

\begin{thm}\label{thm:compactness}
Let $(\varepsilon_n)_n, (\delta_n)_n\subset(0,\infty)$ be two infinitesimal sequences such that
\[
\lim_{n\to\infty} \frac{\varepsilon_n}{\delta_n}=0.
\]
Assume that {\rm \ref{H1}}-{\rm \ref{H4}} are in force.
Let $(u_n)_n\subset H^1(\Omega;\R^M)$ be such that
\[
\sup_{n\in\N} F_{\varepsilon_n,\delta_n}(u_n) < +\infty.
\]
Then, up to a subsequence, it holds that $u_n\to u$ strongly in $L^1(\Omega;\R^M)$, for some $u\in BV(\Omega;\{z_1,\dots,z_k\})$.
\end{thm}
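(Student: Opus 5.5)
The plan is to remove the dependence on the spatial variable right away using \ref{H4}, and then run the classical Modica--Mortola compactness argument for the homogeneous potential $V$. Since $W(x,p)\geq V(p)$ for every $x$, we have
\[
G_n(u_n)\coloneqq\int_\Omega \Big[\frac{1}{\e_n}V(u_n)+\e_n|u_n'|^2\Big]\dx \leq F_{\e_n,\delta_n}(u_n)\leq C .
\]
So it is enough to prove compactness for sequences with bounded energy $G_n$. The ratio $\e_n/\delta_n$ and the almost periodicity \ref{H1} play no role here; compactness holds for any relation between $\e_n$ and $\delta_n$.

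\emph{Step 1 (equi-integrability).} From $\int_\Omega V(u_n)\dx\leq C\e_n\to0$ and the at least linear growth of $V$ at infinity (or, alternatively, from \ref{H3} applied to $W$), we get
\[
\int_{\{|u_n|\geq R'\}}|u_n|\dx\leq C'\e_n .
\]
Hence $(u_n)$ is bounded in $L^1$ and equi-integrable. Moreover $V(u_n)\to0$ in $L^1$, so, up to a subsequence, $\operatorname{dist}(u_n,\{z_1,\dots,z_k\})\to0$ a.e. The last claim uses that $V$ is bounded away from zero outside neighbourhoods of the wells: this follows from continuity and growth, and otherwise one truncates.

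\emph{Step 2 (BV bound through the geodesic distance).} Define $d_V(p,q)$ as in \eqref{eq:distance_dw} with $V$ in place of $W(x,\cdot)$, and set $\phi_i(p)\coloneqq d_V(z_i,p)$. Each $\phi_i$ is locally Lipschitz, with $|\nabla\phi_i|\leq 2\sqrt{V}$ a.e. By Young's inequality,
\[
\int_\Omega|(\phi_i\circ u_n)'|\dx\leq\int_\Omega2\sqrt{V(u_n)}|u_n'|\dx\leq G_n(u_n)\leq C .
\]
The sequence $\phi_i\circ u_n$ is also bounded in $L^1$, because $\phi_i(p)$ grows at most like $C(1+|p|^{?})$ and we only need truncated versions: replace $\phi_i$ by $\min\{\phi_i,T\}$, which is still admissible. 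Therefore $(\phi_i\circ u_n)_n$ is bounded in $BV(\Omega)$. By the compact embedding of $BV$ into $L^1$ on an interval, a subsequence converges in $L^1$ and a.e. to some $v_i\in BV(\Omega)$, for every $i$.

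\emph{Step 3 (identification of the limit).} Since $d_V(z_i,z_j)>0$ for $i\neq j$, the vector $(\phi_1,\dots,\phi_k)$ separates the wells: at the point $z_j$ its $j$-th coordinate is $0$ and all others are positive. Combining this with Step 1, a.e. limit points of $u_n(x)$ lie in $\{z_1,\dots,z_k\}$ and are uniquely determined by $(v_i(x))_i$. Hence $u_n\to u$ a.e. with $u(x)\in\{z_1,\dots,z_k\}$, and equi-integrability plus Vitali's theorem upgrade this to strong convergence in $L^1(\Omega;\R^M)$. Finally, $v_i=\phi_i\circ u$ takes finitely many values and lies in $BV(\Omega)$. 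In one dimension a BV function with finitely many values has finitely many jumps, so each set $\{u=z_i\}$ is a finite union of open intervals, up to null sets. This gives $u\in BV(\Omega;\{z_1,\dots,z_k\})$ in the sense of Definition~\ref{def:limiting_density}.

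\emph{Main obstacle.} The delicate point is the vector-valued case $M>1$ under only \ref{H1}--\ref{H4}, without \ref{H5}. There one must check that $d_V(z_i,z_j)>0$ and that $\phi_i$ is finite and continuous. Positivity follows from $V>0$ away from the wells, since any curve leaving a small ball around $z_i$ pays a positive amount. Continuity follows from the continuity of $V$ by joining nearby points with segments. I would also avoid any issue with the growth of $\phi_i$ by working with the truncations $\min\{\phi_i,T\}$, where $T<\min_{i\neq j}d_V(z_i,z_j)/2$. These truncations are bounded and still separate the wells, so no integrability issue arises in Step 2.
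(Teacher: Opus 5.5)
Your proposal is correct and follows the paper's route. The paper also uses \ref{H4} (together with \ref{H3}, through a cut-off potential $\mathcal{W}\le W$ that equals $V$ near the wells and is linear at infinity) to reduce to an $x$-independent Modica--Mortola energy, and then invokes the Fonseca--Tartar compactness argument, which is essentially the geodesic-distance $BV$ bound, well identification and Vitali step you spell out (your truncations $\min\{\phi_i,T\}$ are a clean way to skip any growth estimate on $\phi_i$; like the paper, you tacitly need $V$ continuous, which can be arranged by replacing it with $\inf_{x}W(x,\cdot)$, continuous by Lemma~\ref{lem:unif_cont}).
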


The main result of this paper is the characterization of the asymptotic limit of the family $F_{\e,\delta}$ as $\e,\delta\to0$ in the regime $\e\ll\delta$.

\begin{thm}\label{thm:main}
Let $(\varepsilon_n)_n, (\delta_n)_n\subset(0,\infty)$ be two infinitesimal sequences such that
\[
\lim_{n\to\infty} \frac{\varepsilon_n}{\delta_n}=0.
\]
Assume that {\rm \ref{H1}}-{\rm \ref{H5}} are in force.
Then, $(F_{\e,\delta})_{\e,\delta}$ $\Gamma$-converges to $F_{0,0}$ with respect to the strong $L^1(\Omega;\R^M)$ topology.
\end{thm}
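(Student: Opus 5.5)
The proof splits into the liminf inequality and the construction of recovery sequences. Compactness is already provided by Theorem \ref{thm:compactness}, so for the liminf we may assume $u_n\to u\in BV(\Omega;\{z_1,\dots,z_k\})$ with $\liminf_n F_{\e_n,\delta_n}(u_n)<\infty$. Let $x_1<\dots<x_m$ be the jump points of $u$, and choose disjoint intervals $I_l=(x_l-r,x_l+r)$ such that $u$ is constant on each side of $x_l$ in $I_l$. Since the energy is additive over disjoint intervals, it suffices to prove, for each $l$ with $x_l\in J^{ij}_u$,
\[
\liminf_n F_{\e_n,\delta_n}(u_n;I_l)\ge \sigma_{ij}.
\]
Rescale $y=x/\delta_n$ and set $v_n(y)=u_n(\delta_n y)$ and $\rho_n=\e_n/\delta_n\to0$. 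Then
\[
F_{\e_n,\delta_n}(u_n;I_l)=\int_{I_l/\delta_n}\Big[\tfrac{1}{\rho_n}W(y,v_n)+\rho_n|v_n'|^2\Big]\dy .
\]
This is a Modica--Mortola energy with small parameter $\rho_n$ and a non-oscillating, $y$-dependent potential, posed on intervals of length $2r/\delta_n\to\infty$ centred at $c_n=x_l/\delta_n$.

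The pointwise lower bound comes from Young's inequality, $\tfrac1\rho W+\rho|v'|^2\ge 2\sqrt{W(y,v)}|v'|$. Fix $\eta>0$. Because $u_n\to u$ in $L^1$ and the energy is bounded, a standard selection argument (the one used in the proof of Theorem \ref{thm:compactness}, via \ref{H4}) produces points $a_n<c_n<b_n$ in the rescaled interval with $|v_n(a_n)-z_i|<\eta$ and $|v_n(b_n)-z_j|<\eta$. On $[a_n,b_n]$, partition the set where $v_n$ is far from all wells. The part where $v_n$ lies in a compact set $K$ is bounded using \ref{H3}. Each excursion of $v_n$ from a neighbourhood of one well $z_p$ to a neighbourhood of another $z_q$ takes place on some interval $[s,t]$. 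Along such an excursion one has $\int_s^t2\sqrt{W(y,v_n)}|v_n'|\ge \dw(y_0;z_p,z_q)-o(1)$ for a suitable $y_0\in[s,t]$. This requires the excursion length $t-s$ to be controlled: $W\ge V$ from \ref{H4} is bounded below away from the wells, so $t-s\lesssim\rho_n\cdot$ energy, which tends to $0$. It also requires $W$ to be uniformly continuous in $y$ on $\R\times K$, which follows from \ref{H1}, since almost periodic functions are uniformly continuous. The gaps near the wells are bridged by \ref{H5}, which gives $\dw(y;p,z)\le C\omega(|p-z|)$ uniformly in $y$.

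The result is a piecewise constant $w_n\in\mathcal A_{i,j}$, obtained after translating by $-c_n$ and freezing the position of each transition, such that
\[
\int_{J_{w_n}}\dw(y;w_n^-,w_n^+)\,d\mathcal H^0\le F+o(1)+C\omega(\eta).
\]
This is evaluated for the potential $W(\cdot+c_n,\cdot)$ rather than $W$. The main obstacle is to show that $\sigma_{ij}$, defined with $W$, also lower-bounds the same problem for every translate $W(\cdot+c,\cdot)$. Here \ref{H1} enters. Take a $\lambda$-uniform almost period $\tau$ with $|\tau-c|\le\ell_\lambda$ on the compact set $K$ containing the geodesics, whose Euclidean length is finite by \ref{H5}. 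Then $\sup_y|\dw(y+c;p,q)-\dw(y+c-\tau;p,q)|$ is small, because $\dw$ depends continuously on $W$ in sup norm along curves of bounded length, with $\sqrt{\cdot}$ modulus. In addition, $\dw(\cdot+c-\tau)$ is a translate by at most $\ell_\lambda$, and $\sigma_{ij}$ is translation invariant in $y$ because $\mathcal A_{i,j}$ is. Hence $\sigma_{ij}$ computed with the translated potential is at least $\sigma_{ij}-o_\lambda(1)$. The number of jumps of $w_n$ is bounded by energy divided by $\min_{p\ne q}\inf_y \dw$, which is positive by \ref{H4}. Letting $n\to\infty$, then $\lambda,\eta\to0$, gives the liminf.

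For the limsup, it suffices to treat $u$ with finitely many jumps. For each jump $x_l\in J^{ij}_u$, pick a $\theta$-optimal $w\in\mathcal A_{i,j}$ with jumps at $y_1<\dots<y_q$. Using \ref{H1} again, choose an almost period $\tau_n$ with $|\tau_n-x_l/\delta_n|\le\ell_\lambda$. Place at the points $\delta_n(\tau_n+y_s)$, which lie within $O(\delta_n)$ of $x_l$, the one-dimensional optimal profiles for $\dw(\tau_n+y_s;\cdot,\cdot)$ at scale $\e_n$. These are reparametrised near-geodesics for the frozen potential, truncated near the wells and glued with linear interpolations of cost $O(\omega(\eta))$ via \ref{H5}, in the manner of \cite{CriGra}. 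Because $\e_n\ll\delta_n$, each profile lives on an interval of length $O(\e_n)$, over which $W(x/\delta_n,\cdot)$ varies by $o(1)$ thanks to uniform continuity. Its cost is therefore $\dw(\tau_n+y_s)+o(1)\le\dw(y_s)+o_\lambda(1)$. The resulting sequence converges to $u$ in $L^1$, and a diagonal argument in $\theta,\lambda,\eta$ concludes the proof.
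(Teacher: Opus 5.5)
Your proof is correct. The upper bound follows the paper's idea, but your lower bound takes a genuinely different and more elementary route.

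\textbf{Lower bound.} The paper proceeds in four steps:
\begin{itemize}
\item it blows up at a jump point, with radii $r_k$ satisfying $\delta_{n_k}/r_k\to0$, and fixes the boundary values by De Giorgi slicing;
\item it replaces $W$ by the auxiliary potential $\widetilde W_\eta(x,p)=\inf_i W(x+\tau_i,p)$, the infimum over $\eta$-almost periods, so that the drifting centre $x_0/\delta_{n_k}$ is absorbed into an $n$-independent potential on $\R$;
\item it extracts a limit profile $f\in\mathcal A_{1,2}$ via Theorem \ref{thm:mm_x};
\item it shows $\widetilde\sigma^\eta_{12}\to\sigma_{12}$ as $\eta\to0$.
\end{itemize}
You instead stay at finite $n$ and fixed $r$. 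You split the rescaled trajectory into finitely many ordered inter-well excursions of length $O(\rho_n)$, freeze $y$ on each one, and read off a competitor $w_n\in\mathcal A_{i,j}$ directly.

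What this buys:
\begin{itemize}
\item it bypasses blow-up, slicing, limit profiles and the auxiliary potential;
\item it shows that in the lower bound \ref{H1} enters only through the uniform continuity of $W$ on $\R\times K$.
\end{itemize}
The step you call the main obstacle is unnecessary. The competitor jumping at the untranslated excursion points already lies in $\mathcal A_{i,j}$, so no almost period is needed there. Your argument for that step is correct, just redundant.

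Two details deserve one line each in a write-up:
\begin{itemize}
\item On an excursion you have $W\ge c_\eta>0$, so its Euclidean length is at most the energy divided by $2\sqrt{c_\eta}$. This is what lets you freeze $\sqrt{W}$ in $y$.
\item For $M=1$, where \ref{H5} is not assumed, the gaps near the wells are bridged by uniform continuity instead, since $\sup_y W(y,q)\to0$ as $q\to z_p$.
\end{itemize}

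\textbf{Upper bound.} You use the same mechanism as the paper: an almost period $\tau_n$ close to $x_l/\delta_n$, Lemma \ref{lemma:reparameterization} at scale $\e_n$, and uniform continuity over intervals of length $O(\e_n/\delta_n)$ in the fast variable. The difference is that you place one profile at each jump $\tau_n+y_s$ of a near-optimal $w\in\mathcal A_{i,j}$.

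Step 1 of the proof of Proposition \ref{prop:limsup} instead picks a single $\bar y$ with $\dw(\bar y;z_1,z_2)\le\sigma_{12}+\lambda$. This presupposes that one-jump competitors are almost optimal, which fails in general once $k\ge3$. For example, take $M=1$, wells $z_1<z_3<z_2$, and arrange the costs of $z_1\to z_3$ and of $z_3\to z_2$ to be minimized at different points. Then $\inf_y\dw(y;z_1,z_2)>\sigma_{12}$.

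Your multi-profile construction, with consecutive profiles separated by $\delta_n(y_{s+1}-y_s)\gg\e_n$, is therefore what the statement actually requires. It is exactly the splitting phenomenon the abstract advertises.
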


The proofs we will develop can be easily adapted in order to also cover the mass constrained case.
Nevertheless, as it is customary in this case, extra requirements on the behavior of the potential close to the wells are needed (see \cite{FonPopo}, \cite{CriDeuPig}).

\begin{enumerate}[label=(H\arabic*), ref=(H\arabic*), resume]
\item \label{H7} There exist $r_0 > 0$ and a non-decreasing function $\theta: [0,r_0] \to [0,+\infty)$ with $\lim_{r \to 0^+} \theta (r) = 0$ such that, for all $x\in\R$, it holds
    \[
    W(x,p) \leq \theta(|p-z_i|) |p-z_i|,
    \]
    for all $p \in \R^M$ with $|p-z_i|\leq r_0$.
\end{enumerate}

We now introduce the mass-constrained functionals.

\begin{definition}
Fix $m\in\R^M$. For $\e,\delta>0$, let $F^m_{\e,\delta}: L^1(\Omega;\R^M)\to \R\cup \{+\infty\}$ as
\[
F^m_{\e,\delta}(u) \coloneqq 
\left\{
\begin{array}{ll}
\displaystyle F_{\e,\delta}(u) & \text{ if } u\in H^1(\Omega;\R^M), \quad \int_\Omega u(x) \dx = m,\\
& \\
+\infty & \text{ else}.
\end{array}
\right.
\]
Moreover, we define $F^m_{0,0}: L^1(\Omega;\R^M)\to \R\cup \{+\infty\}$ as
\[
F^m_{0,0}(u) \coloneqq 
\left\{
\begin{array}{ll}
\displaystyle F_{0,0}(u) & \text{ if } u\in BV(\Omega;\{z_1,\dots,z_k\}), \quad \int_\Omega u(x) \dx = m,\\
& \\
+\infty & \text{ else}.
\end{array}
\right.
\]
\end{definition}

\begin{thm}\label{thm:mass_constrained}
Fix $m\in\R^M$. Let $(\varepsilon_n)_n, (\delta_n)_n\subset(0,\infty)$ be two infinitesimal sequences such that
\[
\lim_{n\to\infty} \frac{\varepsilon_n}{\delta_n}=0.
\]
Assume that {\rm \ref{H1}}-{\rm \ref{H4}} are in force.
Let $(u_n)_n\subset H^1(\Omega;\R^M)$ be such that
\[
\sup_{n\in\N} F^m_{\varepsilon_n,\delta_n}(u_n) < +\infty.
\]
Then, up to a subsequence, it holds that $u_n\to u$ strongly in $L^1(\Omega;\R^M)$, for some $u\in BV(\Omega;\{z_1,\dots,z_k\})$ with
\[
\int_\Omega u(x) \dx = m.
\]
Moreover, if {\rm \ref{H1}}-{\rm \ref{H7}} hold, then $(F^m_{\e,\delta})_{\e,\delta}$ $\Gamma$-converges to $F^m_{0,0}$ with respect to the strong $L^1(\Omega;\R^M)$ topology.
\end{thm}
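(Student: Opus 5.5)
The proof reduces Theorem \ref{thm:mass_constrained} to Theorem \ref{thm:compactness} and Theorem \ref{thm:main}, and the only genuinely new ingredient is a recovery sequence that respects the mass constraint.

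\emph{Compactness and closedness of the constraint.} Since $F^m_{\e,\delta}\geq F_{\e,\delta}$, Theorem \ref{thm:compactness} gives, up to a subsequence, $u_n\to u$ in $L^1(\Omega;\R^M)$ with $u\in BV(\Omega;\{z_1,\dots,z_k\})$. The map $v\mapsto\int_\Omega v\dx$ is continuous in $L^1$, so $\int_\Omega u\dx=m$.

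\emph{Liminf inequality.} Let $u_n\to u$ in $L^1$. If $\liminf F^m_{\e_n,\delta_n}(u_n)=+\infty$ there is nothing to prove. Otherwise, along a subsequence the $u_n$ satisfy the constraint, and so does $u$ by the previous step. The liminf inequality of Theorem \ref{thm:main}, applied to $F_{\e_n,\delta_n}\leq F^m_{\e_n,\delta_n}$, gives $F_{0,0}(u)\leq\liminf F^m_{\e_n,\delta_n}(u_n)$. The right-hand side of this bound is exactly $F^m_{0,0}(u)$.

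\emph{Limsup inequality.} Take $u\in BV(\Omega;\{z_1,\dots,z_k\})$ with $\int_\Omega u\dx=m$, and let $(v_n)$ be the recovery sequence from Theorem \ref{thm:main}. I would use the structure of that construction: $v_n$ is built from near-optimal $\sigma_{ij}$-competitors rescaled at scale $\delta_n$, whose transition layers of width $O(\e_n)$ are glued to constant wells. The mass defect $m_n:=m-\int_\Omega v_n\dx$ then tends to $0$. I would additionally arrange $|m_n|\lesssim\delta_n L+\e_n$, where $L$ is the length of the truncation interval of the competitor; this is where \ref{H7} is needed, to make the tails converge to the wells fast enough.

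To correct the mass, fix an interval $I\subset\Omega$ on which $u\equiv z_i$ and $v_n\equiv z_i$ for $n$ large. Inside $I$, add a small bump $v_n+\phi_n$, where $\phi_n\in H^1_0(I;\R^M)$ satisfies $\int\phi_n=m_n$. I would choose $\phi_n = m_n\,|J_n|^{-1}\,\psi((x-x_0)/|J_n|)$ with $\int\psi=1$, supported on an interval $J_n\subset I$ of length of order $\rho_n$, so that $|\phi_n|\lesssim |m_n|/\rho_n=:r_n$, which we want to tend to $0$. Its energy cost is bounded by
\[
\frac{1}{\e_n}\int_{J_n}W\left(\frac{x}{\delta_n},z_i+\phi_n\right)\dx+\e_n\int|\phi_n'|^2\dx \lesssim \frac{\rho_n}{\e_n}\,\theta(r_n)\,r_n+\e_n\frac{m_n^2}{\rho_n^3}.
\]
Here \ref{H7} is used uniformly in $x$, which removes any dependence on the almost-periodic oscillation. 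Taking $\rho_n$ of order $\e_n^{1/2}|m_n|^{1/2}$ makes both terms small, by the standard Fonseca--Popovici/Cristoferi--Deuschel--Pignatelli argument. If the second term is not small for this choice, I would instead place the correction directly in the tails of the transition layers, mimicking \cite{CriDeuPig}, or decrease $\rho_n$ further using $\theta\to0$.

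\emph{Main obstacle.} The delicate point is quantifying $m_n$ relative to $\e_n$: because the $\delta_n$-scale competitor consists of constant wells except on $O(\e_n)$ layers, I expect $m_n=O(\e_n)$ once the optimal profiles are cut off appropriately. With $m_n=O(\e_n)$ the bump cost is $O(\theta(r_n))\to0$, and the limsup follows from that of Theorem \ref{thm:main}.
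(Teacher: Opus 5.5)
Your compactness and liminf steps coincide with the paper's. (One slip: it is the left-hand side $F_{0,0}(u)$, not the right-hand side, that equals $F^m_{0,0}(u)$.) Your limsup has the same architecture as the paper's: take the recovery sequence of Proposition \ref{prop:limsup}, add a bump on an interval where $u_n\equiv z_i$, and bound its potential energy via \ref{H7} uniformly in $x$.

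\textbf{The sizing of the bump fails.} In your own bound, $\frac{\rho_n}{\varepsilon_n}\theta(r_n)r_n=\frac{|m_n|}{\varepsilon_n}\theta(r_n)$, so the cost is of order $\frac{|m_n|}{\varepsilon_n}\theta(|m_n|/\rho_n)+\varepsilon_n|m_n|^2\rho_n^{-3}$. Both terms are nonincreasing in $\rho_n$, so there is nothing to balance. Take your choice $\rho_n\sim(\varepsilon_n|m_n|)^{1/2}$ together with the typical size $|m_n|\sim\varepsilon_n$. Then $r_n\sim(|m_n|/\varepsilon_n)^{1/2}\sim 1$, so $\theta(r_n)\not\to0$, and \ref{H7} may not even apply if $r_n>r_0$. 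The gradient term is also of order $(|m_n|/\varepsilon_n)^{1/2}\sim 1$. Your fallback of shrinking $\rho_n$ makes both terms larger. The paper does the opposite: in your notation it adds $m_n\phi$ with a fixed $\phi\in C^\infty_c(B(y,r);[0,1])$, $\int\phi=1$. The potential cost is then at most $C\frac{|m_n|}{\varepsilon_n}\theta(|m_n|)\le C\theta(C\varepsilon_n)\to0$, and the gradient cost is $O(\varepsilon_n^3)$.

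\textbf{The bound $|m_n|\le C\varepsilon_n$ is the crux, and you only state it as an expectation.} Your intermediate bound $|m_n|\lesssim\delta_nL+\varepsilon_n$ would not suffice. With $|m_n|\sim\delta_n$, even the fixed bump costs about $\frac{\delta_n}{\varepsilon_n}\theta(C\delta_n)$. This is not controlled by \ref{H7}, since $\delta_n/\varepsilon_n\to\infty$ and $\theta$ comes with no rate. In the paper the $O(\varepsilon_n)$ bound is immediate and involves neither tails nor \ref{H7}. The recovery sequence of Proposition \ref{prop:limsup} is a single profile reparametrized by Lemma \ref{lemma:reparameterization}, which reaches the wells exactly. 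So $u_n\neq u$ only on an interval of length $\nu_n\le\varepsilon_nL/\sqrt{\lambda}$ by \eqref{eq:nu}. If you instead rescale a multi-jump competitor at scale $\delta_n$, as your description suggests, the intermediate plateaus have length of order $\delta_n$. The defect is then genuinely of that order, so ``cutting off the profiles appropriately'' does not give $m_n=O(\varepsilon_n)$, and a different mass correction would be needed. Hypothesis \ref{H7} is used only to bound the energy of the bump, not to control the mass defect.
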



\section{Technical Results}
\label{sec:Technical Results}

In this section, we collect the several technical results that will be needed in the proof of the main result (Theorem \ref{thm:main}).
For the reader's convenience, we divide them in separare sections.

\subsection{Previous results on the Modica-Mortola functional}

First, we state a classical reparametrization lemma due to Modica (see \cite[roposition 2]{modica87}).
For a proof, see \cite[Lemma 4.5]{CriGra}.

\begin{lemma}\label{lemma:reparameterization}
    Fix $\lambda > 0$, $\varepsilon > 0$, $x \in \Omega$, and $p,q \in \R^M$. Let $\gamma \in C^1([-1,1]; \R^M)$ with $\gamma'(s) \neq 0$ for all $s \in (0,1)$, $\gamma(-1) = p$, $\gamma(1) = q$. Then, there exist $\nu > 0$ and $C > 0$ with
    \begin{equation}\label{eq:nu}
    C \varepsilon \leq \nu \leq \frac{\varepsilon}{\sqrt{\lambda}} \int_{-1}^1 | \gamma'(s)| \dif s,
    \end{equation}
    and $g \in C^1([0, \nu]; [-1,1])$ such that
    \begin{equation}\label{eq:g}
        (g'(t))^2 = \frac{\lambda + W(x,\gamma(g(t)))}{\varepsilon^2 | \gamma'(g(t))|^2},
    \end{equation}
    for all $t \in (0,\tau)$, $g(0) = 0$, $g(\tau) = 1$, and
    \begin{align*}
        \int_{-1}^\nu \left[ \frac{1}{\varepsilon} W(x, \gamma(g(t))) + \varepsilon |\gamma'(g(t))|^2 | g'(t)|^2 \right] \dif t
        \leq \int_{-1}^1 2 \sqrt{W(x, \gamma(s))} | \gamma'(s) | \dif s
            + 2 \sqrt{\lambda} \int_{-1}^1 | \gamma'(s) |\dif s.
    \end{align*}
\end{lemma}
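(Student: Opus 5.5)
We follow Modica's classical construction: determine $g$ by solving the ODE \eqref{eq:g} directly, then estimate the energy of the reparametrized curve through the arithmetic--geometric mean inequality. (The statement mixes $\nu$ and $\tau$ and writes the integral from $-1$; we read all of these as one interval $[0,\nu]$ and allow $g$ to run from $-1$ to $1$. If one insists on $g(0)=0$, $g(\nu)=1$, the argument is identical on half the curve.) With $x$ and $\lambda>0$ fixed, set
\[
\phi(s)\coloneqq \frac{\varepsilon|\gamma'(s)|}{\sqrt{\lambda+W(x,\gamma(s))}}, \qquad s\in[-1,1].
\]
Because $\gamma\in C^1$, $W$ is continuous and $\lambda>0$, the function $\phi$ is continuous, nonnegative and bounded. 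It is positive wherever $\gamma'\neq0$, hence on the open interval.

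\emph{Step 1: defining $g$.} We define the time map $\Phi(\sigma)\coloneqq\int_{-1}^{\sigma}\phi(s)\,\dif s$ and set $\nu\coloneqq\Phi(1)$. The map $\Phi$ is $C^1$ and strictly increasing, since $\phi>0$ except possibly at the endpoints. We let $g\coloneqq\Phi^{-1}\colon[0,\nu]\to[-1,1]$. Then $g'(t)=1/\phi(g(t))$, which is exactly \eqref{eq:g}, and $g$ attains the prescribed endpoint values. If $\gamma'$ vanishes at an endpoint, $g'$ may blow up there; this is harmless because $g'$ remains integrable, or one can argue on $[0,\nu-\eta]$ and pass to the limit.

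\emph{Step 2: bounds on $\nu$.} Since $W\ge0$, we have $\phi\le\varepsilon|\gamma'|/\sqrt\lambda$, and integrating gives the upper bound in \eqref{eq:nu}. For the lower bound we use the continuity of $W(x,\cdot)$ on the compact set $\gamma([-1,1])$, which gives $\lambda+W(x,\gamma)\le\Lambda$. Hence $\nu\ge \varepsilon\Lambda^{-1/2}\int_{-1}^1|\gamma'|\,\dif s$, and we take $C\coloneqq\Lambda^{-1/2}\int|\gamma'|$, which is positive because $\gamma'\not\equiv0$.

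\emph{Step 3: the energy estimate.} Equation \eqref{eq:g} makes the two terms of the integrand balance: $\varepsilon|\gamma'(g)|^2|g'|^2=\frac1\varepsilon(\lambda+W(x,\gamma(g)))$. The integrand therefore satisfies
\[
\frac1\varepsilon W+\varepsilon|\gamma'|^2|g'|^2\le \frac{2}{\varepsilon}(\lambda+W)=2\sqrt{\lambda+W}\,|\gamma'(g)|\,g'.
\]
We change variables $s=g(t)$, which gives
\[
\int_0^\nu\cdots\,\dif t\le\int_{-1}^1 2\sqrt{\lambda+W(x,\gamma(s))}\,|\gamma'(s)|\,\dif s.
\]
Finally, $\sqrt{\lambda+W}\le\sqrt W+\sqrt\lambda$ yields the claimed inequality.

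The only delicate point is the possible degeneracy of $\gamma'$ at the endpoints, where $g'$ becomes unbounded. We handle it by truncating, that is, by carrying out the construction on $[-1+\eta,1-\eta]$ and letting $\eta\to0$ through monotone convergence. All the other steps are direct computations.
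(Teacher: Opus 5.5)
Your proof is correct and follows the standard Modica construction that the paper invokes through \cite[Lemma 4.5]{CriGra}: you define $g$ as the inverse of the time map $\Phi(\sigma)=\int_{-1}^{\sigma}\varepsilon|\gamma'(s)|\,(\lambda+W(x,\gamma(s)))^{-1/2}\dif s$, use the equipartition identity $\varepsilon|\gamma'(g)|^2|g'|^2=\frac{1}{\varepsilon}(\lambda+W(x,\gamma(g)))$, change variables $s=g(t)$, and split $\sqrt{\lambda+W}\le\sqrt{\lambda}+\sqrt{W}$. Your constant $C=\Lambda^{-1/2}\int_{-1}^1|\gamma'(s)|\dif s$ is independent of $\varepsilon$, which is what the lower bound in \eqref{eq:nu} is really asserting, and your remark that $g\in C^1$ up to the endpoints needs $\gamma'(\pm1)\neq0$ matches how the paper applies the lemma.
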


\begin{remark}
The requirement $\gamma'(s) \neq 0$ for all $s \in (0,1)$ can be easily obtained by using a reparametrization of the curve (see \cite[Remark 6.4]{CriPig}).
\end{remark}

In the liminf inequality (see Proposition \ref{prop:liminf}), we need the following result which is essentially contained in \cite{CriGra}.
Since the assumptions we use in here are slightly different, we provide a sketch of the proof, for the reader's convenience.

\begin{thm}\label{thm:mm_x}
Let $(T_k)_k\subset(0,1)$ be an infinitesimal sequence, and let $L>0$.
Let $U:(-L,L)\times \R^M\to [0,\infty)$ be a continuous potential vanishing at $\{z_1,\dots,z_k\}$, with at least linear growth at infinity and satisfying Assumption {\rm \ref{H4}}.
Let $(u_k)_k \subset H^1((-L,L);\R^M)$ be such that
\[
\sup_{k\in\N}\int_{(-L,L)} \left[ \frac{1}{T_k} U(x,u_k(x)) + T_k |u'_k(x)|^2  \right] \dx < \infty.
\]
Then, up to a subsequence (not relabeled) $u_k\to u$ for some $u\in BV((-L,L);\{z_1,\dots,z_k\})$.

Moreover, assume that there exists $C>0$ such that the following holds: for all $z_i\neq z_j$, if $\widetilde{\gamma}\in W^{1,1}([-1,1];\R^M)$ is such that $\widetilde{\gamma}(-1)=z_i$, $\widetilde{\gamma}(1)=z_j$ and
\begin{align}
\notag
\int_{-1}^1 2\sqrt{U(x, \widetilde{\gamma}(t))}|\widetilde{\gamma}'(t)|\mathrm{d} t
    &= \lambda_{ij}(x)\\
    &\coloneqq \min\biggl\{ \int_{-1}^1 2\sqrt{U(x, \gamma(t))}|\gamma'(t)|\mathrm{d} t \,:
    \, \gamma\in W^{1,1}([-1,1];\R^M),
        \gamma(-1)=z_i, \gamma(1)=z_j \biggr\},\notag
\end{align}
then
\[
\int_{-1}^1 |\gamma'(t)|\mathrm{d} t \leq C.
\]
Let $u\in BV((-L,L);\{z_1,\dots,z_k\})$. Then,
\[
\liminf_{k\to\infty} \int_{(-L,L)} \left[ \frac{1}{T_k} U(x,u_k(x)) + T_k |u'_k(x)|^2  \right] \dx \geq \int_{J_u\cap(-L,L)} \lambda_{ij}(x)\mathrm{d} \mathcal{H}^0(x),
\]
for all $(u_k)_k\subset H^1((-L,L);\R^M)$ with $u_k\to u$ strongly in $L^1((-L,L);\R^M)$.
\end{thm}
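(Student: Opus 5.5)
The plan is to follow the classical one-dimensional Modica--Mortola argument, using the Modica--Mortola trick pointwise in $x$ and the uniform lower bound $U(x,\cdot)\geq V$ coming from \ref{H4}.

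\emph{Compactness.} Set $\phi(p)\coloneqq \dw_V(z_1,p)$, where $\dw_V$ is the degenerate geodesic distance associated with $V$, defined as in \eqref{eq:distance_dw} with $W(x,\cdot)$ replaced by $V$. By the Young inequality $\frac{1}{T_k}U+T_k|u_k'|^2\geq 2\sqrt{V(u_k)}|u_k'|\geq |(\phi\circ u_k)'|$, so $(\phi\circ u_k)_k$ is bounded in $W^{1,1}((-L,L))$. The energy bound also gives $\int V(u_k)\,\dx\leq CT_k\to0$. The linear growth of $V$ yields equi-integrability of $(u_k)_k$ and coercivity of $\phi$. By compactness in $BV$, $\phi\circ u_k$ converges a.e. along a subsequence. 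Since $V(u_k)\to0$ a.e. along a further subsequence and the wells are isolated, $u_k$ converges a.e. to a function with values in $\{z_1,\dots,z_k\}$. Vitali's theorem then gives $L^1$ convergence. Lower semicontinuity of the total variation shows that $\phi\circ u$ has finite variation, hence finitely many jumps, so $u\in BV((-L,L);\{z_1,\dots,z_k\})$.

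\emph{Liminf.} Let $J_u=\{x_1<\dots<x_m\}$, and fix disjoint intervals $I_l=(x_l-\rho,x_l+\rho)$. By superadditivity of the energy it suffices to show, for each $l$ with $u^-(x_l)=z_i$ and $u^+(x_l)=z_j$,
\[
\liminf_k \int_{I_l}\Bigl[\tfrac{1}{T_k}U(x,u_k)+T_k|u_k'|^2\Bigr]\dx\geq \lambda_{ij}(x_l)-\omega(\rho),
\]
with $\omega(\rho)\to0$, and then let $\rho\to0$. By $L^1$ convergence and a Fubini/Chebyshev argument, I pick points $a_k\in(x_l-\rho,x_l)$ and $b_k\in(x_l,x_l+\rho)$ with $u_k(a_k)\to z_i$ and $u_k(b_k)\to z_j$. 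The Young inequality gives the lower bound $\int_{a_k}^{b_k}2\sqrt{U(x,u_k(x))}|u_k'(x)|\,\dx$. Here $x$ varies along the curve, so this quantity is not directly a geodesic length for the frozen potential $U(x_l,\cdot)$, and I need to freeze $x$.

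\emph{Freezing $x$ (main obstacle).} The difficulty is that $u_k$ could make long excursions to where uniform continuity of $U$ fails, or could have huge length. I will handle this in two steps.
\begin{enumerate}
\item First, I confine the curve to a compact set $K$. Excursions to $|p|\geq R'$ cost at least $\dw_V$-distance, which is large, so a bounded energy forces $u_k([a_k,b_k])\subset K$ for $R'$ large. If this argument is too delicate, I truncate instead by projecting onto a large ball, which decreases energy under linear growth.
\item Next, I compare with the frozen potential. On $(x_l-\rho,x_l+\rho)\times K$ the function $U$ is uniformly continuous, so $\sqrt{U(x,p)}\geq\sqrt{U(x_l,p)}-\sqrt{\omega_U(\rho)}$. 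This bound is weak along long curves, but the correction term $\sqrt{\omega_U(\rho)}\int|u_k'|$ is harmless only if the length is bounded.
\end{enumerate}
This is where the hypothesis bounding the length of minimizing curves by $C$ enters. Instead of bounding the length of $u_k$ itself, I argue as in \cite{CriGra}. Let $\widetilde U_\rho(p)\coloneqq\inf_{|x-x_l|<\rho}U(x,p)$, which is continuous, vanishes exactly at the wells, and satisfies $\widetilde U_\rho\geq V$. Then $\int 2\sqrt{U(x,u_k)}|u_k'|\geq d_{\widetilde U_\rho}(u_k(a_k),u_k(b_k))$. It remains to show that $d_{\widetilde U_\rho}(z_i,z_j)\to\lambda_{ij}(x_l)$ as $\rho\to0$. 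For this I take near-minimizing curves for $\widetilde U_\rho$; these stay in a compact set, and uniform-length bounds for curves whose cost is close to the minimum can be obtained from \ref{H5}-type behaviour near the wells. Such curves converge by Arzel\`a--Ascoli, and lower semicontinuity of the weighted length functional together with $\widetilde U_\rho\uparrow U(x_l,\cdot)$ gives the limit. Finally, continuity of $d_{\widetilde U_\rho}$ near the wells handles the passage $u_k(a_k)\to z_i$ and $u_k(b_k)\to z_j$. Summing over $l$ gives the claim.
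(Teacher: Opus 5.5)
Your lower bound follows the paper's route. Your $\widetilde U_\rho$ is exactly the paper's $F_n(p)=\inf_{|x-x_0|<r_n}U(x,p)$. You redo by hand, via Young's inequality and the endpoints $a_k,b_k$, the homogeneous Modica--Mortola bound that the paper quotes. Both arguments end with $d_{\widetilde U_\rho}(z_i,z_j)\to\lambda_{ij}(x_l)$.

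\textbf{Gap in the last step.} You get length bounds for near-minimizers of $\widetilde U_\rho$ from ``\ref{H5}-type behaviour near the wells''. But \ref{H5} is not a hypothesis of this theorem. The length hypothesis that is assumed concerns exact minimizers for the frozen potentials $U(x,\cdot)$, so it says nothing about $\widetilde U_\rho$, which is not of that form. Taken literally, no length bound can hold for all near-minimizers: a loop of any length inside a small enough ball around a well costs arbitrarily little. Uniform convergence $\widetilde U_\rho\to U(x_l,\cdot)$ on compact sets (Dini, as in the paper) controls the weighted length only along curves of bounded Euclidean length. So you do need bounded-length competitors, and you can produce them under the present assumptions, as follows.
\begin{enumerate}
\item Fix $\rho_0$ with $[x_l-\rho_0,x_l+\rho_0]\subset(-L,L)$. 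Fix a compact $K$ containing every curve from $z_i$ of $\widetilde U_\rho$-cost at most $\lambda_{ij}(x_l)+1$; this exists because $\widetilde U_\rho\geq V$ and $V$ grows linearly. Fix $r>0$ with the balls $\overline{B_r(z_m)}\subset K$ pairwise disjoint. Let $c_r>0$ be the minimum of $U$ on $[x_l-\rho_0,x_l+\rho_0]\times(K\setminus\bigcup_m B_r(z_m))$.
\item Take $\gamma_\rho$ with cost at most $d_{\widetilde U_\rho}(z_i,z_j)+\rho$. For each $m$, replace the arc between the first and last times $\gamma_\rho$ lies in $\overline{B_r(z_m)}$ by the radial segments joining its endpoints to $z_m$.
\item Since $\widetilde U_\rho\leq U(x_l,\cdot)$, the cost rises by at most $\epsilon(r)\coloneqq 4kr\max_m\max_{\overline{B_r(z_m)}}\sqrt{U(x_l,\cdot)}$, which tends to $0$ as $r\to0$. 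The new curve has length at most $C_r\coloneqq 2kr+(\lambda_{ij}(x_l)+1)/(2\sqrt{c_r})$, uniformly in $\rho\leq\rho_0$.
\item Set $\omega_\rho\coloneqq\max_K(U(x_l,\cdot)-\widetilde U_\rho)$, which tends to $0$ by uniform continuity of $U$ on $[x_l-\rho_0,x_l+\rho_0]\times K$. Using $\sqrt a\leq\sqrt b+\sqrt{a-b}$ for $a\geq b\geq0$, you get $\lambda_{ij}(x_l)\leq d_{\widetilde U_\rho}(z_i,z_j)+\rho+\epsilon(r)+2C_r\sqrt{\omega_\rho}$.
\item Let $\rho\to0$ and then $r\to0$. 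Arzel\`a--Ascoli is not needed.
\end{enumerate}

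\textbf{Gap in compactness.} This is separate and easily repaired. A single $\phi=\mathrm{d}_V(z_1,\cdot)$ need not separate the wells when there are at least three; for instance $\phi(z_2)=\phi(z_3)$ is possible. Then a.e. convergence of $\phi\circ u_k$ together with $V(u_k)\to0$ does not determine the limit of $u_k(x)$. Likewise, finite variation of $\phi\circ u$ does not rule out infinitely many jumps between $z_2$ and $z_3$. Run the argument with all $\phi_m\coloneqq\mathrm{d}_V(z_m,\cdot)$, $m=1,\dots,k$. Each satisfies the same bound $|(\phi_m\circ u_k)'|\leq\frac{1}{T_k}U(x,u_k)+T_k|u_k'|^2$. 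Their joint values separate the wells. Moreover, each jump of $u$ from $z_m$ to $z_n$ is a jump of $\phi_m\circ u$ of height at least $\min_{m\neq n}\mathrm{d}_V(z_m,z_n)>0$. Since the variations are uniformly bounded, this bounds the number of jumps of $u$.
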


\begin{proof}
We sketch the proof of the lower bound.
We use blow-up at a jump point $x_0\in(-L,L)$ of $u$ between $z_i$ and $z_j$.
Let $(r_n)_n\subset(0,1)$ be an infinitesimal sequence.
Then, we have to bound from below the quantity
\[
\lim_{n\to\infty} \lim_{k\to\infty} \int_{(x_0-r_n, x_0+r_n)} \left[ \frac{1}{T_k} U(x,u_k(x)) + T_k |u'_k(x)|^2  \right] \dx.
\]
For $n\in\N$, define $F_n:\R^M\to[0,\infty)$ as
\[
F_n(p)\coloneqq \inf\{ U(x,p) : x\in (x_0-r_n, x_0+r_n) \}.
\]
Thus, we have that
\begin{align*}
\lim_{n\to\infty} \lim_{k\to\infty}& \int_{(x_0-r_n, x_0+r_n)} \left[ \frac{1}{T_k} U(x,u_k(x)) + T_k |u'_k(x)|^2  \right] \dx \\
    &\geq \lim_{n\to\infty} \lim_{k\to\infty} \int_{(x_0-r_n, x_0+r_n)} \left[ \frac{1}{T_k} F_n(u_k(x)) + T_k |u'_k(x)|^2  \right] \dx \\
    &\geq \lim_{n\to\infty} \lambda^n_{ij},
\end{align*}
where in the last step we used the classical Modica-Mortola argument for $F_n$ (see \cite{modica87}), and we set
\[
\lambda^n_{ij}\coloneqq \min\left\{ \int_{-1}^1 2\sqrt{F_n(\gamma(t))}|\gamma'(t)|\mathrm{d} t \,:\, \gamma\in W^{1,1}([-1,1];\R^M),
        \gamma(-1)=z_i, \gamma(1)=z_j \right\}.
\]
To conclude, note that $F_n$ in non-decreasing and that, by continuity of $U$ in the first variable, we have $F_n(p)\to U(x_0,p)$ pointwise as $n\to\infty$.
Thus, by Dini's Theorem, we get that $(F_n)_n$ converges to $U(x_0,\cdot)$ uniformly on every compact set of $\R^M$.
Using the growth of $U$ at infinity, we know that there exists $S>0$ such that $|\gamma(t)|\leq S$ for all geodesics for $\lambda_{ij}(x_0)$.
Therefore, we have that
\[
\lim_{n\to\infty}\lambda^n_{ij} = \lambda_{ij}(x_0),
\]
which gives the desired result.
\end{proof}


\subsection{Bound on the Euclidean length of geodesics}

Here, we study the behavior of solutions to the minimization problem defining $\dw(y;p,q)$.
In particular, we show that it admits a solution, and that each solution has Euclidean length bounded by a constant depending only on $\max\{|p|, |q|\}$.
In particular, we will use this result for the special case $p,q\in\{z_1,\dots,z_k\}$.

\begin{lemma}\label{lem:geodesics}
Assume {\rm \ref{H1}}-{\rm \ref{H5}} are in force.
For all $y\in\R$, and all $i, j\in \{1,\dots k\}$, the minimization problem defining $\dw(y;z_i,z_j)$ admits a solution.
Moreover, there exists $L>0$ such that for all $y\in\R$, it holds that
\[
\int_{-1}^1 |\gamma'(t)|\, \dif t \leq L,\quad\quad\quad
\| \gamma \|_{L^\infty([-1,1];\R^M)}\leq L,
\]
for all curves $\gamma\in W^{1,1}([-1,1];\R^M)$ such that
\[
\dw(y;p,q) = 2\int_{-1}^1 \sqrt{W(y,\gamma(t))}|\gamma'(t)|\, \dif t,
\]
and with $\gamma(-1)=z_i, \gamma(1)=z_j$.
\end{lemma}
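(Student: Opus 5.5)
The plan is to prove existence and the length bound together, using the two-sided comparison \ref{H5} (for $M>1$) and the lower bound \ref{H4}, \ref{H3}, in the spirit of \cite{CriDeuPig}. The key point is that every constant must be independent of $y$, and only the uniform bounds in \ref{H3}--\ref{H5} provide this. When $M=1$ the statement is elementary. A geodesic between $z_i$ and $z_j$ can be taken monotone, since backtracking only adds cost. Its length is then $|z_i-z_j|$, and its range lies in $[\min z_\ell,\max z_\ell]$, which fixes $L$. The main work is therefore for $M>1$.

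\textbf{Uniform energy bound and confinement.} Let $g(p):=f(\min_\ell|p-z_\ell|)$. By \ref{H5}, $\dw(y;p,q)\le 2\sqrt{C_1}\,d_g(p,q)$, where $d_g$ is the degenerate geodesic distance built on $g$. Testing with a fixed polygonal path from $z_i$ to $z_j$ gives $\dw(y;z_i,z_j)\le\Lambda$, with $\Lambda$ independent of $y$. Now take any curve whose cost is at most $\Lambda+1$. By \ref{H3}, if the curve reached $|p|\ge S$ for a large $S$, it would have to cross the annulus $R\le|p|\le S$. There $\sqrt{W}\ge\sqrt{R/R}=1$, or more generally $\sqrt{|p|/R}$, so the cost would be at least $2(S-R)$. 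Choosing $S$ large gives $\|\gamma\|_\infty\le S$ uniformly in $y$.

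\textbf{Length bound.} Fix $\rho>0$ smaller than half the minimal distance between wells, and split $[-1,1]$ into the times when $\gamma$ is outside $\bigcup_\ell B_\rho(z_\ell)$ and the times when it is inside some ball.
\begin{itemize}
\item[(a)] \emph{Outside the balls.} By \ref{H5}, $W\ge f(\rho)/C_1$ there, so that part has length at most $\Lambda\sqrt{C_1}/(2\sqrt{f(\rho)})$.
\item[(b)] \emph{Inside the balls.} I use minimality together with a radial comparison. Suppose $\gamma$ enters $B_\rho(z_\ell)$ at time $t_1$ and last leaves it at time $t_2$. Replace $\gamma|_{[t_1,t_2]}$ by the radial segment from $\gamma(t_1)$ to $z_\ell$ followed by the segment from $z_\ell$ to $\gamma(t_2)$. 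By the upper bound in \ref{H5}, this competitor costs at most $4\sqrt{C_1}\int_0^\rho\sqrt{f(r)}\,dr$. Hence the optimal curve's excursion into the ball also costs at most this amount.
\item[(c)] \emph{Converting cost inside a ball to length.} I decompose the ball dyadically into shells $\{2^{-m-1}\rho\le|p-z_\ell|\le 2^{-m}\rho\}$. On the $m$-th shell, $W\ge f(2^{-m-1}\rho)/C_1$. The doubling condition $f(2t)\le C_1f(t)$ makes $f$ comparable across each shell. Applying the radial comparison of (b) shell by shell, that is, to the sub-arc between the first entry into and last exit from $B_{2^{-m}\rho}(z_\ell)$, the curve's length inside the $m$-th shell is controlled by a constant times $2^{-m}\rho$. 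Summing over $m$ gives a geometric bound, uniform in $y$.
\end{itemize}
The number of distinct excursions into balls around wells can be bounded as well. Each excursion that leaves $B_{\rho}$ and crosses to $B_{\rho/2}$ costs at least $c(\rho)>0$, and the total cost is at most $\Lambda$. Alternatively, minimality lets us assume the curve visits each ball in a single excursion.

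\textbf{Existence.} Take a minimizing sequence, which we may assume is confined to $B_S$ by the confinement step. Each member can be modified as in (b)--(c) so that its length is uniformly bounded without increasing its cost. Reparametrize by constant speed; the sequence is then equi-Lipschitz, and Ascoli's theorem gives a uniformly convergent subsequence. The functional $\gamma\mapsto\int 2\sqrt{W(y,\gamma)}|\gamma'|$ is lower semicontinuous under uniform convergence with bounded length, as in the standard argument for length-type functionals with continuous nonnegative weight. So the limit is a minimizer.

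\textbf{Main obstacle.} The delicate step is (c): bounding the Euclidean length of an arbitrary minimizer, rather than of a suitably chosen one, near the wells, where $W$ degenerates. This is where the shell-by-shell radial comparison and the doubling of $f$ are essential. If a direct shell argument fails, I would instead show that any minimizer must coincide, up to reparametrization, with a curve whose length inside $B_{2^{-m}\rho}$ is comparable to $2^{-m}\rho$, arguing as in \cite{CriDeuPig}.
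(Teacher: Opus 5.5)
The paper does not prove this lemma itself; it only invokes \cite[Theorem 4.8]{CriDeuPig}. Your proposal is a sound self-contained alternative, and its core works as stated for \emph{every} minimizer. Take $r_m=2^{-m}\rho$ and use closed balls, so that no part of $\gamma$ outside the first-entry/last-exit interval $[s_m,s_m']$ lies in the $m$-th shell. Minimality bounds the cost of $\gamma|_{[s_m,s_m']}$ by $4\sqrt{C_1}\int_0^{r_m}\sqrt{f(r)}\dif r\le 4\sqrt{C_1}\,r_m\sqrt{f(r_m)}$. On the shell, $\sqrt{W(y,\cdot)}\ge\sqrt{f(r_{m+1})/C_1}$, which uses that $2\rho$ is below the well separation. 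Doubling then gives length at most $2C_1^{3/2}r_m$ in the shell, hence $4C_1^{3/2}\rho$ per well. So the fallback under ``Main obstacle'' is unnecessary. The paragraph on counting excursions can also go: its claim that a minimizer enters each ball once is not justified for an arbitrary minimizer, but nothing uses it. Compared with the citation, your route makes the dependence of $L$ explicit: it involves only $k$, $C_1$, $f$, $R$, the well separation and the $y$-independent bound $\Lambda$. This is exactly the uniformity in $y$ the paper needs, which a result for a single potential does not give without such a check. It does require \ref{H5} for all $x\in\R$, not only a.e.\ $x\in\Omega$ as written; by continuity, a.e.\ $x\in\R$ suffices.

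Two steps need more than you wrote.

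\emph{Existence.} Steps (b)--(c) use minimality, which members of a minimizing sequence lack. Running them with an additive slack $\varepsilon_n$ adds $\varepsilon_n\sqrt{C_1}/(2\sqrt{f(r_{m+1})})$ to the length bound in each shell, and this diverges when summed over $m$. You need an explicit surgery, for example the following. For each well in turn, take the largest closed ball $\bar B_{r_m}(z_\ell)$ whose first-entry/last-exit sub-arc costs more than the direction-independent threshold $4\sqrt{C_1}\int_0^{r_m}\sqrt{f(r)}\dif r$. Replace that sub-arc by the radial segments through $z_\ell$. This strictly lowers the cost and leaves every scale at that well at or below the threshold. Later surgeries at other wells can only replace a piece of such a sub-arc by a cheaper one, cut it down to a sub-arc, or remove it, so the property persists. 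The shell estimate then bounds lengths uniformly along the sequence, and Ascoli plus lower semicontinuity finish as you say.

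\emph{The case $M=1$.} The lemma concerns all minimizers, so ``can be taken monotone'' does not suffice. Since $W(y,\cdot)>0$ off the wells by \ref{H4}, use the area formula $2\int_{-1}^1\sqrt{W(y,\gamma)}|\gamma'|\dif t=2\int_\R\sqrt{W(y,s)}\,N_\gamma(s)\dif s$, where $N_\gamma(s)$ is the number of preimages of $s$. It forces $N_\gamma=1$ a.e.\ between $z_i$ and $z_j$ and $N_\gamma=0$ a.e.\ elsewhere, for every minimizer. This gives both the length bound and the $L^\infty$ bound.
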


The proof of such a result is given in \cite[Theorem 4.8]{CriDeuPig}.


\subsection{Properties of the potential}

We first state a well-known property that uniformly almost-periodic functions are uniformly continuous when restricted to a compact set in the second variable. We omit the proof since it is an easy adaptation of \cite[Property 2, Ch1]{LZ82}.

\begin{lemma}\label{lem:unif_cont}
Let $f\in C^0(\R \times \R^M ; \R)$ be uniformly almost-periodic.
Let $K\subset \R^M$ be a compact set.
Then, $f$ restricted to $\R\times K$ is uniformly continuous.
\end{lemma}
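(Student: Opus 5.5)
The plan is to argue by contradiction, using the almost periodicity in Definition \ref{def:Bohralmper} to reduce uniform continuity on the whole line to uniform continuity on a compact set. Fix a compact $K\subset\R^M$ and $\lambda>0$; the aim is a $\rho>0$ such that $|f(x,p)-f(y,q)|<\lambda$ whenever $(x,p),(y,q)\in\R\times K$, $|x-y|<\rho$ and $|p-q|<\rho$. First I apply \ref{H1} with the compact set $K$ and the threshold $\lambda/3$. This gives a relatively dense set $\{\tau_i\}_{i\in I_{\lambda/3}}$ with some inclusion interval $\ell>0$, such that
\[
\sup_{x\in\R}|f(x+\tau_i,p)-f(x,p)|<\lambda/3\qquad\text{for all } i\in I_{\lambda/3} \text{ and all } p\in K.
\]

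Next I use compactness. The set $[-1,\ell+1]\times K$ is compact, so by the Heine--Cantor theorem $f$ is uniformly continuous there. I choose $\rho\in(0,1)$ such that $|f(x',p)-f(y',q)|<\lambda/3$ whenever $(x',p),(y',q)\in[-1,\ell+1]\times K$, $|x'-y'|<\rho$ and $|p-q|<\rho$.

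Now I translate an arbitrary pair into this box. Take $(x,p),(y,q)\in\R\times K$ with $|x-y|<\rho$ and $|p-q|<\rho$. Because the almost periods are relatively dense, the interval $[-x,-x+\ell]$ contains some $\tau_i$. Then $x'\coloneqq x+\tau_i\in[0,\ell]$, and $y'\coloneqq y+\tau_i\in[-1,\ell+1]$ since $\rho<1$. The triangle inequality gives
\[
|f(x,p)-f(y,q)|\le |f(x,p)-f(x+\tau_i,p)|+|f(x',p)-f(y',q)|+|f(y+\tau_i,q)-f(y,q)|.
\]
Each of the three terms is below $\lambda/3$. The first and third are bounded by the almost-period estimate, and this is exactly where it matters that the estimate is uniform in $x\in\R$ and in $p\in K$. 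The middle term is bounded by the choice of $\rho$. Hence the sum is less than $\lambda$, as required.

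The one step needing care is making sure the almost period $\tau_i$ can be chosen independently of $p$ and $q$. This is precisely the uniformity over $K$ built into Definition \ref{def:Bohralmper}: a single $\tau_i$ serves both points, even though $p\neq q$. Apart from that, the argument is routine.
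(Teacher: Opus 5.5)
Your proof is correct and is exactly the standard argument behind \cite[Property 2, Ch.~1]{LZ82}, which the paper cites in place of a proof. You translate by an almost period into the fixed compact box $[-1,\ell+1]\times K$ and apply Heine--Cantor there; the only adaptation needed is that the almost periods in Definition \ref{def:Bohralmper} are uniform in $p\in K$, as you note (cosmetically, you should invoke Definition \ref{def:Bohralmper} for $f$ rather than \ref{H1}, which concerns $W$).
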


Next, we prove that the Bohr almost periodicity of $W$ is inherited by $\sqrt{W}$ and by $\dw$.
We start with the former.

\begin{lemma}\label{lem:sqrtW_Sap}
Assume {\rm \ref{H1}} holds. Then, the function $\sqrt{W}$ is Bohr almost periodic.
\end{lemma}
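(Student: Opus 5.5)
The plan is to work directly with Definition \ref{def:Bohralmper} for the function $\sqrt{W}$, which is continuous and nonnegative because $W\geq 0$ is continuous. Fix a compact set $K\subset\R^M$ and $\lambda>0$. We will produce a relatively dense set of $\lambda$-uniformly almost periods for $\sqrt{W}$ on $K$. The idea is to take the almost periods of $W$ itself, but with a smaller tolerance chosen as a function of $\lambda$.

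The key tool is the elementary inequality
\[
|\sqrt{a}-\sqrt{b}|\leq \sqrt{|a-b|}\qquad\text{for all } a,b\geq 0 .
\]
To check it, assume without loss of generality that $a\geq b$. Then $(\sqrt a-\sqrt b)^2\leq (\sqrt a-\sqrt b)(\sqrt a+\sqrt b)=a-b$.

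With this in hand, apply \ref{H1} to $W$ with the same compact set $K$ and tolerance $\lambda^2$. This gives a relatively dense set $\{\tau_i\}_{i\in I_{\lambda^2}}$ such that $\sup_{x\in\R}|W(x+\tau_i,p)-W(x,p)|<\lambda^2$ for all $p\in K$. For each such $\tau_i$, each $x\in\R$ and each $p\in K$, the inequality above yields
\[
\bigl|\sqrt{W(x+\tau_i,p)}-\sqrt{W(x,p)}\bigr|\leq \sqrt{|W(x+\tau_i,p)-W(x,p)|}.
\]

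We then pass to the supremum in $x$, which requires a little care to keep the inequality strict. Set $s_i(p):=\sup_x|W(x+\tau_i,p)-W(x,p)|<\lambda^2$. Since $t\mapsto\sqrt t$ is monotone, the supremum of $|\sqrt{W(x+\tau_i,p)}-\sqrt{W(x,p)}|$ over $x$ is at most $\sqrt{s_i(p)}<\lambda$. Hence every $\tau_i$ is a $\lambda$-uniformly almost period of $\sqrt W$. The set of these $\tau_i$ is relatively dense, with the inclusion interval coming from $W$ at tolerance $\lambda^2$.

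There is no genuine obstacle in this argument. The only point to watch is that the square root is merely $1/2$-Hölder near $0$, not Lipschitz, so one cannot reuse the same $\lambda$. The quadratic choice $\lambda^2$ of the tolerance handles this, uniformly in $p\in K$ and with no use of Lemma \ref{lem:unif_cont}.
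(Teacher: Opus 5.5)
Your proof is correct and follows essentially the same route as the paper: apply \ref{H1} on the same compact set with tolerance $\lambda^2$, then use the bound $|\sqrt a-\sqrt b|\le\sqrt{|a-b|}$, which the paper derives from $\sqrt{a+b}\le\sqrt a+\sqrt b$ one direction at a time. Your handling of the supremum also gives the strict inequality $<\lambda$ required by Definition \ref{def:Bohralmper}, whereas the paper only records $\le\lambda$.
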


\begin{proof}
Fix $\lambda>0$ and a compact set $K\subset\R^M$.
Using \ref{H1}, there exists a relatively dense set $\{\tau\}_\lambda$ such that
\[
\sup_{y\in\R} |W(y+\tau,p) - W(y,p)|<\lambda^2,
\]
for any $\tau\in\{\tau\}_\lambda$ and for any $p\in K$.
Fix $y\in \Omega$, $p\in K$, and $\tau\in\{\tau\}_\lambda$.
Then, using the inequality $\sqrt{a+b}\leq \sqrt{a}+\sqrt{b}$, we get
\begin{align*}
\sqrt{W(y+\tau, p)} - \sqrt{W(y, p)} &=
    \sqrt{|W(y+\tau, p) - W(y, p)+W(y, p)|} - \sqrt{W(y, p)} \\
&\leq \sqrt{|W(y+\tau, p) - W(y, p)| + W(y, p)} - \sqrt{W(y, p)} \\
&\leq \sqrt{|W(y+\tau, p) - W(y, p)|} \\
&\leq \lambda.
\end{align*}
The opposite inequality follows in a similar way.
Therefore, we get that
\[
\left| \sqrt{W(y+\tau, p)} - \sqrt{W(y, p)}  \right| \leq \lambda.
\]
Since $\lambda>0$, $y\in\R$ and $p\in K$ were arbitrary, this concludes the proof.
\end{proof}

We now use the above result to prove that $\dw$ is Bohr almost periodic.

\begin{lemma}\label{lem:dW_Sap}
Assume {\rm \ref{H1}}-{\rm \ref{H5}} hold.
Then, for all $p,q\in\R^M$, the function $y\mapsto\dw(y;p,q)$ is Bohr almost periodic.
\end{lemma}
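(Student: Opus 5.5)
Fix $p,q\in\R^M$ and $\lambda>0$. The plan is to pick a compact set $K$ containing all near-optimal curves, uniformly in the space variable. Then I compare the two geodesic problems at $y$ and $y+\tau$ by using the same curve in both, with $\tau$ an almost period of $\sqrt{W}$ given by Lemma \ref{lem:sqrtW_Sap}. Since $\dw$ is an infimum of $2\int\sqrt{W(y,\gamma)}|\gamma'|$, a uniform bound $|\sqrt{W(y+\tau,\cdot)}-\sqrt{W(y,\cdot)}|\le\lambda'$ on $K$ will change the cost of any curve lying in $K$ by at most $2\lambda'\int|\gamma'|$. So I need curves that are both near-optimal and confined to a fixed compact set with bounded Euclidean length.

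The key step is this a priori control. For $p,q\in\{z_1,\dots,z_k\}$, Lemma \ref{lem:geodesics} gives minimizers $\gamma_y$ for $\dw(y;p,q)$ with $\int|\gamma_y'|\le L$ and $\|\gamma_y\|_\infty\le L$, uniformly in $y$. For general $p,q$ I would proceed in two ways.
\begin{itemize}
\item I would argue as in \cite[Theorem 4.8]{CriDeuPig} to get minimizers with Euclidean length and sup norm bounded by a constant $L(p,q)$ independent of $y$. This uses \ref{H3}--\ref{H5}, and only lower bounds on $W$ that do not depend on $y$ enter.
\item Alternatively, I would avoid minimizers and work with $\eta$-optimal curves. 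First, using \ref{H3}, I would truncate a curve by projecting the portions outside a large ball onto its boundary sphere; this decreases the cost, by the growth condition, provided the radius exceeds $R$, $|p|$ and $|q|$. Second, using \ref{H5}, I would replace the portions of the curve lying in a fixed annulus/region away from the wells by pieces of controlled length.
\end{itemize}
I expect this uniform length bound to be the main obstacle. In the general case I would fall back on the \cite{CriDeuPig} argument and note that its constants depend only on $C_1$, $f$, $R$, $V$ and $\max\{|p|,|q|\}$. In the paper itself only $p,q$ among the wells are used, so Lemma \ref{lem:geodesics} suffices there.

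With $K=\overline{B_L}$, Lemma \ref{lem:sqrtW_Sap} provides a relatively dense set of $\tau$ such that $\sup_{y}\sup_{z\in K}|\sqrt{W(y+\tau,z)}-\sqrt{W(y,z)}|<\lambda/(2L)$. Let $\gamma_y$ be a minimizer at $y$, given by Lemma \ref{lem:geodesics}. Then
\[
\dw(y+\tau;p,q)\le 2\int_{-1}^1\sqrt{W(y+\tau,\gamma_y)}|\gamma_y'|\,\dif t\le \dw(y;p,q)+2\cdot\tfrac{\lambda}{2L}\cdot L=\dw(y;p,q)+\lambda .
\]
Symmetrically, testing $\dw(y;p,q)$ with the minimizer $\gamma_{y+\tau}$, which also lies in $K$ and has length at most $L$, gives the reverse inequality. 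Hence $\sup_y|\dw(y+\tau;p,q)-\dw(y;p,q)|\le\lambda$ for every such $\tau$. Since this set of $\tau$ is relatively dense, $y\mapsto\dw(y;p,q)$ is Bohr almost periodic.

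Continuity in $y$, needed for the definition since almost periodic functions are taken continuous, follows by the same comparison. I would replace the almost-period shift by a small shift $h$ and use Lemma \ref{lem:unif_cont}, the uniform continuity of $\sqrt{W}$ on $\R\times K$, which follows from that of $W$.
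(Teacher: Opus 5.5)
Your proposal is correct and follows the paper's own route. Like the paper, you use Lemma \ref{lem:geodesics} to get uniform bounds on the sup norm and length of minimizers, Lemma \ref{lem:sqrtW_Sap} to get almost periods of $\sqrt{W}$ on the resulting compact set, and you test each geodesic problem with the minimizer of the other. Your write-up is slightly more careful than the paper's: the paper phrases the comparison as an infimum over all curves and drops the factor $2$, while you test with the specific minimizer and keep track of the constant. You also explicitly flag two points the paper leaves implicit, namely endpoints $p,q$ outside the wells and continuity in $y$.
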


\begin{proof}
Fix $p,q\in\R^M$, and let $\lambda>0$.
Using Lemma \ref{lem:geodesics}, we know that there exists $R>0$ such that any solution $\gamma_y$ for the minimization problem defining $\dw(y;p,q)$ takes values in $B_R(0)$, for all $y\in\R$.
Moreover, there exists $L>0$ such that
\begin{equation}\label{eq:bound_L}
\int_{-1}^1 |\gamma_y'(t)|\dif t \leq L,
\end{equation}
for all $y\in\R$.
Using \ref{H1}, there exists a relatively dense set $\{\tau\}_\lambda$ be such that
\[
\sup_{y\in\R} \left| \sqrt{W(y+\tau,p)} - \sqrt{W(y,p)} \right|<\lambda,
\]
for any $\tau\in\{\tau\}_\lambda$ and for any $p\in \R^M$ with $|p|\leq R$.
Let $\gamma\in W^{1,1}([-1,1];\R^M)$ be such that $\gamma(-1)=p$, $\gamma(1)=q$, and $\gamma(t)\in B_R(0)$ for all $t\in[-1,1]$.
Then, for all $\tau\in\{\tau\}_\lambda$ it holds that
\begin{align*}
\int_{-1}^1 \sqrt{W(y+\tau, \gamma(t))}|\gamma'(t)|\dif t - \dw(y;p,q)
    &\leq \int_{-1}^1 \sqrt{W(y+\tau, \gamma(t))}|\gamma'(t)|\dif t -
        \int_{-1}^1 \sqrt{W(y, \gamma(t))}|\gamma'(t)|\dif t \\
&\leq \int_{-1}^1 \left| \sqrt{W(y+\tau, \gamma(t))}|\gamma'(t)|
            - \sqrt{W(y, \gamma(t))}\right| |\gamma'(t)| \dif t \\
&\leq \lambda \int_{-1}^1 |\gamma'(t)| \dif t \\
&\leq \lambda L,
\end{align*}
where in the last step we used \eqref{eq:bound_L}.
Taking the infimum over all admissible curves, we get that
\[
\dw(y+\tau;p,q) - \dw(y;p,q)\leq \lambda L.
\]
The other inequality is obtained in a similar way.
\end{proof}


\section{Compactness}
\label{sec:Compactness}

In this section, we are going to prove Theorem \ref{thm:compactness}.

\begin{proof}
    Let $(u_n)_{n } \subset H^{1}(\Omega; \R^M)$ be a sequence such that
    $$ \sup_{n \in \N} F_{\e_n, \delta_n} (u_n) \leq C < +\infty. $$
    Let $R > 0$ be the constant given in Assumption \ref{H3}. Up to increasing $R$, we can assume $R > \max\{|z_1|, \dots, |z_k| \}$.  Let $\varphi \in C^\infty ([0,\infty))$ be a function such that $\varphi \equiv 1$ for $t \in [0,R)$ and $\varphi \equiv 0$ for $t \geq 2R$. Taking into account the potential $V$ defined as in Assumption \ref{H4}, we introduce the following modified potential $\mathcal{W} \colon \R^M \to [0,\infty)$ defined as
    $$ \mathcal{W}(z) \coloneqq \varphi (|z|) V (z) + (1 - \varphi(|z|) ) \frac{1}{R} |z|. $$
    The potential ${\mathcal W}$ satisfies Assumptions \ref{H3} and\ref{H5} and  vanishes only at the points $\{z_1, \dots, z_k\}$. Since $\mathcal{W} (z) \leq W (x,z)$ for any $x \in \R$ and $p\in \R^M$, we get
    $$ \sup_{n \in \N} \int_\Omega \left[ \frac{1}{\varepsilon_n}  \mathcal{W} (u_n) + \varepsilon_n | \nabla u_n |^2 \right] \dif x \leq \sup_{n \in \N} F_{\e_n, \delta_n} (u_n) \leq C < +\infty. $$
    This allows us to use the methods of \cite[Section 4]{fonseca1989gradient} in order to extract a subsequence $(u_{n_k})_{k } \subset H^{1}(\Omega; \R^M)$ such that $u_{n_k} \to u \in \mathrm{BV}(\Omega; \{z_1,\dots, z_k\})$ strongly in $L^1 (\Omega; \R^M)$. 
\end{proof}


\section{Liminf inequality}
\label{sect:liminf}

The goal of this section is to prove the following result.

\begin{prop}\label{prop:liminf}
Let $u\in L^1(\Omega;\R^M)$, and let $(\e_n)_n, (\delta_n)_n\subset(0,\infty)$ infinitesimal sequences such that
\[
\lim_{n\to\infty} \frac{\varepsilon_n}{\delta_n}=0.
\]
Assume that {\rm \ref{H1}}-{\rm \ref{H5}} hold.
Then,
\[
F_{0,0}(u) \leq \liminf_{n\to\infty} F_{\e_n,\delta_n}(u_n),
\]
for all sequence  $(u_n)_n\subset L^1(\Omega;\R^M)$ such that $u_n\to u$ with respect to the strong  $L^1(\Omega;\R^M)$ topology.
\end{prop}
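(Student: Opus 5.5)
We may assume $\liminf_n F_{\e_n,\delta_n}(u_n)<\infty$ and pass to a subsequence realizing it as a limit. By Theorem \ref{thm:compactness} we then have $u\in BV(\Omega;\{z_1,\dots,z_k\})$. Since $J_u$ is finite, we localize: around each jump point $x_0\in J^{ij}_u$ we choose disjoint intervals $I_{x_0}=(x_0-r,x_0+r)$ on which $u$ equals $z_i$ to the left of $x_0$ and $z_j$ to the right. It then suffices to prove
\[
\liminf_{n\to\infty}\int_{I_{x_0}}\Big[\tfrac1{\e_n}W(\tfrac x{\delta_n},u_n)+\e_n|u_n'|^2\Big]\dx\ \ge\ \sigma_{ij}.
\]

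\textbf{Blow-up.} We change variables $y=(x-x_0)/\delta_n$ and set $v_n(y)\coloneqq u_n(x_0+\delta_n y)$ and $T_n\coloneqq \e_n/\delta_n\to0$. The localized energy becomes
\[
\int_{-r/\delta_n}^{r/\delta_n}\Big[\tfrac1{T_n}W(y+\tfrac{x_0}{\delta_n},v_n)+T_n|v_n'|^2\Big]\dy .
\]
The spatial shift $s_n\coloneqq x_0/\delta_n$ is unbounded, and this is exactly where \ref{H1} enters.

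\textbf{Replacing the shift by a bounded one.} Fix $\eta>0$ and a large compact set $K\subset\R^M$ containing all geodesics from Lemma \ref{lem:geodesics}. By \ref{H1} (and Lemma \ref{lem:sqrtW_Sap}) there is a relatively dense set of $\eta$-almost periods with inclusion length $\ell_\eta$. Hence we can write $s_n=\tau_n+\theta_n$ with $\tau_n$ an $\eta$-almost period and $\theta_n\in[0,\ell_\eta]$. Up to a subsequence, $\theta_n\to\theta$. Then, uniformly for $p\in K$,
\[
|W(y+s_n,p)-W(y+\theta_n,p)|<\eta .
\]
Using also Lemma \ref{lem:unif_cont}, we get $W(y+s_n,p)\ge W(y+\theta,p)-2\eta$ for $n$ large.

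To handle the values of $v_n$ outside $K$, and to keep the potential vanishing at the wells, we introduce the auxiliary potential
\[
U_\eta(y,p)\coloneqq \big(W(y+\theta,p)-2\eta\big)^+\wedge\text{(truncation)},
\]
modified near $\partial K$ and at infinity using $V$ from \ref{H4}, so that $U_\eta\le W(\cdot+s_n,\cdot)$. Because $U_\eta$ no longer vanishes exactly at $z_i$, one of the following is needed:
\begin{itemize}
\item use instead $U_\eta=\max\{0,W(\cdot+\theta,\cdot)-2\eta\}$ combined with $V$ near the wells; or
\item more simply, lower the estimate of $\sqrt{W}$ by $\sqrt{(W-2\eta)^+}$ and absorb the error into geodesic lengths, which are bounded by Lemma \ref{lem:geodesics} via \ref{H5}.
\end{itemize}

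\textbf{Lower bound for fixed $\eta$.} For fixed $\eta$ and a fixed large window $(-L,L)$, we now have a Modica–Mortola functional with a fixed, non-oscillating potential. The $v_n$ converge (Theorem \ref{thm:mm_x}, compactness part) to some $w\in BV((-L,L);\{z_1,\dots,z_k\})$. Because $u_n\to u$ in $L^1$ near $x_0$, the profile $w$ should equal $z_i$ far left and $z_j$ far right. Ensuring this requires a diagonal choice of $L\to\infty$ together with an argument that the energy does not allow transitions escaping to infinity; we would handle it by taking $w$ on the whole line as an $L^1_{loc}$ limit and noting that it has finitely many jumps (energy bound plus $V$). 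The lower bound in Theorem \ref{thm:mm_x} then yields
\[
\liminf\ \ge\ \sum_{y\in J_w}\dw^{U_\eta}\big(y;w^-(y),w^+(y)\big),
\]
where $\dw^{U_\eta}$ denotes the degenerate geodesic distance \eqref{eq:distance_dw} with $W$ replaced by $U_\eta$. Since $w(\cdot-\theta)\in\mathcal A_{i,j}$ after translation, and $\dw^{U_\eta}(y;\cdot,\cdot)\ge \dw(y+\theta;\cdot,\cdot)-C\sqrt{\eta}L$ (bounded geodesic lengths, Lemma \ref{lem:geodesics}), we obtain $\liminf\ge\sigma_{ij}-C_k\sqrt\eta$. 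The constant $C_k$ depends on the number of jumps of $w$, which is bounded by the energy divided by $\min\dw>0$. Letting $\eta\to0$ gives the claim.

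\textbf{Main obstacle.} The step I expect to be hardest is the blow-up limit: guaranteeing that the $L^1_{loc}$ limit $w$ of the $v_n$ connects $z_i$ to $z_j$. The difficulty is that the macroscopic jump could be realized by transitions at scales between $\delta_n$ and $1$, which would be lost in the window. I would try to rule this out as follows. Partition $(-r/\delta_n,r/\delta_n)$ into the finitely many windows where $v_n$ is far from the wells; the number of such windows is bounded by the energy. Then perform the almost-period reduction separately on each window, with its own bounded shift $\theta_n^{(m)}$. The resulting chain of jumps, reassembled with arbitrary gaps, is still a competitor in $\mathcal A_{i,j}$, because $\sigma_{ij}$ allows jumps at arbitrary positions. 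It then suffices to use the comparison $\dw(y+s;\cdot,\cdot)\ge\inf_x$ over the same almost-period class, which is exactly Lemma \ref{lem:dW_Sap}.
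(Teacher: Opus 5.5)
\textbf{The gap.} Your main line is essentially the paper's route: localize at a jump, rescale by $\delta_n$, use an $\eta$-almost period to make the shift $x_0/\delta_n$ bounded, apply Theorem~\ref{thm:mm_x} on windows $(-L,L)$, then let $\eta\to0$. It has a genuine gap at the point you flag yourself. An $L^1_{\mathrm{loc}}$ limit of the $\delta_n$-blow-ups need not go from $z_i$ to $z_j$, and ``finitely many jumps'' says nothing about its values at $\pm\infty$.
\begin{itemize}
\item If $u_n$ makes its only transition at $x_0+\sqrt{\delta_n}$, every such blow-up converges locally to the constant $z_i$.
\item If $u_n$ jumps $z_i\to z_c$ at $x_0$ and $z_c\to z_j$ at $x_0+\sqrt{\delta_n}$, no single recentring sees both jumps.
\end{itemize}
The paper's Step~4.2 does not close this either. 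It infers $f\in\mathcal A_{1,2}$ from $f_k=z_1$ on $(-\infty,a_k)$ and $f_k=z_2$ on $(b_k,\infty)$. But $a_k\to-\infty$ and $b_k\to+\infty$, so this information is lost in the limit.

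A secondary issue: your $U_\eta$ is not really specified. The comparison $\dw^{U_\eta}\geq\dw(\cdot+\theta;\cdot,\cdot)-C\sqrt{\eta}L$ needs a length bound for $U_\eta$-geodesics. Lemma~\ref{lem:geodesics} is a statement about $W$ and does not give this; with $(W-2\eta)^+$ the potential even vanishes on whole neighbourhoods of the wells. The paper's choice $\widetilde W_\eta(x,p)=\inf_i W(x+\tau_i,p)$ avoids the $-2\eta$ error altogether.

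\textbf{How to repair it.} Your multi-window remedy is the right fix and is not optional. It can be done more simply than you suggest, directly in the $x$ variable.
\begin{enumerate}
\item After the usual truncation $u_n\in K$, pick points near the two ends of $I_{x_0}$ where $u_n$ is $\rho$-close to $z_i$, resp.\ $z_j$.
\item Record the successive passages of $u_n$ from $\overline{B_\rho(z_a)}$ to $\overline{B_\rho(z_b)}$ with $b\neq a$. Count these passages, not the components of $\{\mathrm{dist}(u_n,\{z_1,\dots,z_k\})\geq\rho\}$: the number of components is not controlled by the energy.
\item Each passage lies on an interval $I_m$ outside all $\rho$-balls. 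Hence $|I_m|\leq C\e_n/c_\rho$, where $c_\rho\coloneqq\inf\{W(x,p): x\in\R,\ \mathrm{dist}(p,\{z_1,\dots,z_k\})\geq\rho\}>0$.
\item On $I_m$ the variable $x/\delta_n$ moves by only $O(\e_n/\delta_n)\to0$. By Lemma~\ref{lem:unif_cont} you can freeze it at the left endpoint $x_m$ of $I_m$. Estimate the error directly on the term $\tfrac{1}{\e_n}W$; it is at most $C\,\omega(C\e_n/\delta_n)/c_\rho$, and no path-length bound is needed.
\item The Modica--Mortola inequality and the triangle inequality for $\dw(x;\cdot,\cdot)$ then bound the cost on $I_m$ below by $\dw(x_m/\delta_n;z_{a_{m-1}},z_{a_m})$ minus an error. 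That error vanishes as $\rho\to0$ uniformly in $n$ and $x$: join $z_a$ to the entry and exit points by segments and use Lemma~\ref{lem:unif_cont}.
\item Consequently each passage costs at least $\tfrac12\min_{a\neq b}\inf_x\dw(x;z_a,z_b)>0$. Their number is therefore bounded independently of $n$ and $\rho$.
\item The points $x_m/\delta_n$ are already ordered, so the piecewise constant function with these jumps lies in $\mathcal A_{i,j}$. This gives a localized energy of at least $\sigma_{ij}-o(1)$ as $n\to\infty$ and then $\rho\to0$.
\end{enumerate}

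With this argument no almost-period shift, auxiliary potential, reordering via Lemma~\ref{lem:dW_Sap}, or Theorem~\ref{thm:mm_x} is needed for the lower bound. Uniform continuity of $W$ on $\R\times K$ together with $\e_n\ll\delta_n$ suffices.
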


\begin{proof}
Let $(u_n)_n\subset H^1(\Omega;\R^M)$ with $u_n\to u$ in $L^1(\Omega; \R^M)$. 
Without loss of generality, we can assume that
\begin{equation}\label{eq:liminf=lim}
\Lambda\coloneqq \liminf_{n\to\infty} F_{\e_n,\delta_n}(u_n)
	= \lim_{n\to\infty} F_{\e_n,\delta_n}(u_n) < \infty.
\end{equation}
Since some of the steps in our arguments are nowadays standard in the community, we prefer to focus on the novelties of our argument, and spare the reader with the technical details of such standard steps.

Using a standard truncation argument (see, for instance, Step 1 of the proof of \cite[Theorem 4.1]{CFG_fixed_sup}), we can further assume that there exists a compact set $K\subset\R^M$ such that
\begin{equation}\label{eq:u_n_in_K}
u_n(x)\in K \quad\quad\quad\text{ for a.e. } x\in\Omega.
\end{equation}
For more details, the reader can consult, for instance, Step 1 of the proof of \cite[Proposition 42]{CriPig}. \\

\emph{Step 1: Set-up.} 
We will use the blow-up method.
For any $n\in\mathbb{N}$, consider the finite nonnegative Radon measure
     \begin{equation}
         \notag
         \mu_n\coloneqq \left[ {1\over \e_n} W\left({x\over \delta_n}, u_n(x) \right)  + \e_n |u'_n(x)|^2 \right] \mathcal{L}^1 \restr \Omega.
     \end{equation}
Thanks to \eqref{eq:liminf=lim}, it follows that
\[
\sup_{n\in\mathbb{N}}\mu_n(\Omega)<\infty.
\]
Therefore, up to a subsequence, $\mu_n$ weakly$^\ast$ converges to some finite nonnegative Radon measure $\mu$ in $\Omega$.
By the compactness result Theorem \ref{thm:compactness}, we have that $u\in BV(\Omega; \{z_1, \dots z_k\})$.

Let $x_0\in\Omega\cap J_u$ be a jump point of $u$.
Without loss of generality, we can assume that $u^-(x_0)=z_1$ and that $u^+(x_0)=z_2$.
We claim that 
    \begin{equation}\label{eq:blow_up}
        {d\mu\over d\mathcal{H}^0 \restr J_u}(x_0) \geq \sigma_{12}. 
    \end{equation}
Since $\mu_n$ and $\mu$ are non-negative measures, this will allow us to conclude.

We are now left with proving \eqref{eq:blow_up}.
A standard argument from measure theory gives the existence of an infinitesimal sequence $(r_k)_k\subset(0,1)$ such that
\begin{equation}\label{eq:liminf_1}
{d\mu\over d\mathcal{H}^0 \restr J_u}(x_0)
    = \lim_{k\to\infty} \lim_{n\to\infty}
        \int_{x_0-r_k}^{x_0+r_k} \left[ {1\over \e_n} W\left({x\over \delta_n}, u_n(x) \right)  + \e_n |u'_n(x)|^2 \right] \dx.
\end{equation}
Using the change of variables $x=x_0+r_k y$, and defining
\[
\widetilde{u}_{k,n}(y)\coloneqq u_n(x_0+r_k y),
\]
we have that
\begin{equation}\label{eq:liminf_2}
\int_{x_0-r_k}^{x_0+r_k} \left[ {1\over \e_n} W\left({x\over \delta_n}, u_n(x) \right)  + \e_n |u'_n(x)|^2 \right] \dx
    =
\int_{-1}^{1} \left[ {r_k\over \e_n} W\left( \frac{x_0}{\delta_n} + {r_k\over \delta_n}y, \widetilde{u}_{n,k}(y) \right)  + \frac{\e_n}{r_k} |\widetilde{u}'_{k,n}(y)|^2 \right] \dy,
\end{equation}
where we used the fact that $\widetilde{u}'_{k,n}(y) = r_k u'_n(x_0+r_k y)$.
Using the fact that $u^-(x_0)=z_1$ and of $u^+(x_0)=z_2$, we obtain that
\begin{equation}\label{eq:conv_L1}
\lim_{k\to\infty} \lim_{n\to\infty} \| \widetilde{u}_{k,n} - \varphi\|_{L^1((-1,1); \R^M)} = 0,
\end{equation}
where
\[
\varphi(x) \coloneqq
\left\{
\begin{array}{ll}
z_1 & \text{ if } x\leq 0,\\
z_2 & \text{ if } x>0.
\end{array}
\right.
\]
Now, a diagonal argument due to Attouch (see \cite[Corollary 1.16]{A84}) allows us to find an increasing sequence $(n_k)_k$ such that
\begin{equation}\label{eq:diagonal}
\lim_{k\to\infty} \frac{\varepsilon_{n_k}}{\delta_{n_k}}=0,
\quad\quad\quad
\lim_{k\to\infty} \frac{\varepsilon_{n_k}}{r_k} = \lim_{k\to\infty} \frac{\delta_{n_k}}{r_k}=0,
\end{equation}
and (recall \eqref{eq:conv_L1})
\[
\lim_{k\to\infty} \| \widetilde{v}_k - \varphi \|_{L^1((-1,1); \R^M)}=0,
\]
where, we define, for $k\in\N$, the function $\widetilde{v}_k:(-1,1)\to\R^M$ as $\widetilde{v}_k\coloneqq \widetilde{u}_{k,n_k}$.\\


\emph{Step 2: Change of the boundary conditions.} 
We claim that there exists $(v_k)_k\subset H^1((-1,1);\R^M)$ with $v_k(-1) = z_1$, and $v_k(1)= z_2$, such that
\begin{align}\label{eq:liminf_3}
&\liminf_{k\to\infty} \int_{-1}^{1} \left[ {r_k\over \e_n} W\left( \frac{x_0}{\delta_n} + {r_k\over \delta_n}y, \widetilde{u}_{n,k}(y) \right)
    + \frac{\e_n}{r_k} |\widetilde{u}'_{k,n}(y)|^2 \right] \dy \notag\\
&\hspace{3cm}\geq \liminf_{k\to\infty} \int_{-1}^{1} \left[ {r_k\over \e_{n_k}} W\left( \frac{x_0}{\delta_{n_k}} + {r_k\over \delta_{n_k}}y, v_k(y) \right) 
        + \frac{\e_{n_k}}{r_k} |v'_{k}(y)|^2 \right] \dy,
\end{align}
and such that $v_k\to \varphi$ with respect to the strong $L^1((-1,1);\R^M)$ topology.
This result is standard, known as De Giorgi's slicing. A proof in this setting can be found, for instance, in \cite[Lemma 3.1]{CriFonHagPop}. Note that we can refer to that result because the argument used in there only relies on the growth of the potential at infinity, and not the relation between the parameters $\delta_{n_k}$ and $\varepsilon_{n_k}$.\\


\emph{Step 3: Introduction of an auxiliary potential.}
Let $K\subset\R^M$ be the compact set given in \eqref{eq:u_n_in_K}.
Fix $\eta>0$. Thanks to Definition \ref{def:Bohralmper}, there exists a set $\{\tau_i\}_{i\in I_\lambda}$ relatively dense such that 
          \begin{align}
              \notag
             \sup_{x\in\R} |W(x+\tau_i, p) - W(x, p)|\leq \eta, 
          \end{align}
 for any $p\in K$ and for any $i\in I_\lambda$. Set 
          \begin{align}
              \notag
              \widetilde{W}_\eta (x, p) \coloneqq \inf_{i\in I_\lambda} W(x+\tau_i, p).
          \end{align}
Note that in case $W$ is periodic in the first variable, it holds that $\widetilde{W}_\eta=W$ for all $\eta>0$.
We claim that the following hold:
  \begin{itemize}
      \item[(i)] For any $x\in\Omega$ and for any $p\in\R^M$, it holds that
      \[
      \widetilde{W}_\eta(x,p)\geq V(p),
      \]
      where $V$ given as in {\rm{\ref{H4}}};
      \item[(ii)] Let $R>0$ be the constant given in {\rm{\ref{H3}}}. Then,
      \[
      \widetilde{W}_\eta(x,p)\geq {|z|\over R},
      \]
      for any $x\in\Omega$ and for any $z\in\R^M$, with $|z|\geq R$;
      \item[(iii)] We have that $\widetilde{W}_\eta(x,p)=0$ if and only if $p\in \{z_1, \dots, z_k\}$;
      \item[(iv)] The function $(x, p)\mapsto \widetilde{W}_\eta(x,p)$ is continuous.
  \end{itemize}
Conditions (i) and (ii) follow directly from {\rm{\ref{H3}}} and {\rm{\ref{H4}}}, respectively.
In order to prove (iii), we note that $\widetilde{W}_\eta(x,p)=0$ if $p\in \{z_1, \dots, z_k\}$.
Using (ii), also the reverse implication holds, as desired.

Finally, we prove (iv). Let $(\overline{x},\overline{p})\in \R\times\R^M$.
Assume by contradiction that there exists $(x_n,p_n)_n\subset \R\times\R^M$ with $(x_n,p_n)\to (\overline{x},\overline{p})$ such that
\[
\left| \lim_{n\to\infty} \widetilde{W}_\eta(x_n,p_n) - \widetilde{W}_\eta(\overline{x},\overline{p}) \right| >0.
\]
Without loss of generality, we can assume that
\[
\lambda\coloneqq \lim_{n\to\infty} \widetilde{W}_\eta(x_n,p_n) - \widetilde{W}_\eta(\overline{x},\overline{p})  >0.
\] 
Using the uniform continuity of $W$ in $\R\times \overline{B(\overline{x}, 1)}$ given by Lemma \ref{lem:unif_cont}, there exists $\mu>0$ such that
\[
|x-y|< \mu, |p-q|<\mu \,\Rightarrow \, \left| W(x,p) - W(y,q) \right| < \frac{\lambda}{2}.
\]
Thus, there exists $\overline{n}\in\N$ such that, for all $n\geq\overline{n}$, it holds that
\[
\left| W(x_n,p_n) - W(\overline{x},\overline{p}) \right| < \frac{\lambda}{2}.
\]
Therefore, for all $i\in I_\lambda$, and all $n\geq\overline{n}$, we get
\[
\widetilde{W}_\eta(x_n,p_n) \leq W(x_n+\tau_i,p_n) \leq W(\overline{x},\overline{p}) + \frac{\lambda}{2}.
\]
Taking the infimum over all $i\in I_\lambda$, shifting the terms around, and taking the limit in $n$ yields
\[
0 < \lambda = \lim_{n\to\infty} \widetilde{W}_\eta(x_n,p_n) - \widetilde{W}_\eta(\overline{x},\overline{p})  < \frac{\lambda}{2},
\]
which gives the desired contradiction.

For $x\in\R$, and $p,q\in\R^M$, define
\[
\mathrm{d}_{\widetilde{W}_\eta}(x;p,q) \coloneqq \inf\left\{ 2\int_{-1}^1 \sqrt{\widetilde{W}_\eta(x,\gamma(t))}|\gamma'(t)|\, \dif t \,:\, \gamma\in W^{1,1}([-1,1];\R^M),
    \gamma(-1)=p,\, \gamma(1)=q \right\},
\]
and, for $i\neq j\in\{1,\dots,k\}$,
    \[
    \widetilde{\sigma}^\eta_{ij}\coloneqq \inf\left\{ \int_{J_w} \mathrm{d}_{\widetilde{W}_\eta}(x; w^{-}(x), w^{+}(x)) d{\mathcal H}^0(x) 
        \,:\, w\in \mathcal{A}_{i,j}  \right\},
    \]
where the admissible class $\mathcal{A}_{i,j}$ is defined as in Definition \ref{def:limiting_density}.\\


\emph{Step 4: Approximate estimate of the energy.} 
We claim that 
\[
{d\mu\over d\mathcal{H}^0 \restr J_u}(x_0) \geq \widetilde{\sigma}^\eta_{12}.
\]
We split the proof in three steps.

\emph{Step 4.1: the cell formula.} The relatively density of $\{\tau_i\}_{i\in I_\lambda}$ implies that, for each $k\in\N$, there exists $\tau_{n_k}\in \{\tau_i\}_{i\in I_\lambda}$ such that
          \begin{equation}\label{eq:tau}
              {x_0\over \delta_{n_k}} = \tau_{n_k} -s_{n_k}, \;\;\;\ \hbox{with } |s_{n_k}|<\ell_\eta.
          \end{equation}
Then,
    \begin{align}\label{eq:liminf_4}
    &\liminf_{k\to\infty} \int_{-1}^{1} \left[ {r_k\over \e_{n_k}} W\left( \frac{x_0}{\delta_{n_k}} + {r_k\over \delta_{n_k}}y, v_k(y) \right) 
        + \frac{\e_{n_k}}{r_k} |v'_{k}(y)|^2 \right] \dy\notag\\
    &\quad=\liminf_{k\to\infty} \int_{-1}^{1} \left[ {r_k\over \e_{n_k}} W\left( \tau_{n_k} -s_{n_k} + {r_k\over \delta_{n_k}}y, v_k(y) \right) 
        + \frac{\e_{n_k}}{r_k} |v'_{k}(y)|^2 \right] \dy\notag\\
        &\quad\geq\liminf_{k\to\infty} \int_{-1}^1 \left[ {r_k\over \e_{n_k}}\widetilde{W}_\eta\left(-s_{n_k}+{r_k\over \delta_{n_k}}y, v_k(y)   \right)+ {\e_{n_k}\over r_k} |v'_k(y)|^2\right]\dy \notag \\
        & \quad = \liminf_{k\to\infty}\int_{-1-{\delta_{n_k}\over r_k}s_{n_k}}^{1-{\delta_{n_k}\over r_k}s_{n_k}} \left[{r_k\over \e_{n_k}}\widetilde{W}_\eta\left({r_k\over \delta_{n_k}}z, v_{k}\left(z+{\delta_{n_k}\over r_k}s_{n_k}\right) \right)+ {\e_{n_k}\over r_k} \left|v'_{k}\left(z+{\delta_{n_k}\over r_k}s_{n_k}\right)\right|^2\right]\mathrm{d}z,
    \end{align}
where the last step follows by performing now the change of variables $y= z+{\delta_{n_k}\over r_k}s_{n_k}$.
Now, for each $k\in\N$, let
\[
Q_k\subset \left(-1-{\delta_{n_k}\over r_k}s_{n_k},1-{\delta_{n_k}\over r_k}s_{n_k}\right)
\]
be the biggest interval centered at the origin with that property.
Using \eqref{eq:diagonal} and \eqref{eq:tau}, we get that $Q_k$ converges to $(-1,1)$, as $k\to\infty$.
Moreover, set
\[
w_k(z)\coloneqq v_k\left(z+{\delta_{n_k}\over r_k }s_{n_k}\right).
\]
We obtain 
      \begin{align}\label{eq:liminf_5}
          \liminf_{k\to\infty}&\int_{-1-{\delta_{n_k}\over r_k}s_{n_k}}^{1-{\delta_{n_k}\over r_k}s_{n_k}} \left[{r_k\over \e_{n_k}}\widetilde{W}_\eta\left({r_k\over \delta_{n_k}}z, v_{k}\left(z+{\delta_{n_k}\over r_k}s_{n_k}\right) \right)+ {\e_{n_k}\over r_k} \left|v'_{k}\left(z+{\delta_{n_k}\over r_k}s_{n_k}\right)\right|^2\right]\mathrm{d}z\notag\\
          &\quad=\liminf_{k\to\infty}\int_{-1-{\delta_{n_k}\over r_k}s_{n_k}}^{1-{\delta_{n_k}\over r_k}s_{n_k}} \left[{r_k\over \e_{n_k}}\widetilde{W}_\eta\left({r_k\over \delta_{n_k}}z,  w_{k}(z) \right)+ {\e_{n_k}\over r_k} \left| w'_{k}(z)\right|^2\right]\mathrm{d}z\notag\\
          &\quad \geq \liminf_{k\to\infty}\int_{Q_k} \left[{r_k\over \e_{n_k}}\widetilde{W}_\eta\left({r_k\over \delta_{n_k}}z, w_{k}(z)   \right)+ {\e_{n_k}\over r_k} |w'_{k}(z)|^2\right]\mathrm{d}z,\notag\\
          &\quad \geq \liminf_{k\to\infty}\int_{{r_k\over \delta_{n_k}}Q_k} \left[{\delta_{n_k}\over \e_{n_k}}\widetilde{W}_\eta\left(y, \widetilde{w}_{k}(y)   \right)+ {\e_{n_k}\over \delta_{n_k}} |(\widetilde{w}_k)' (y)|^2\right]\mathrm{d}y,
      \end{align}
where, in the last step, we used the change of variable ${r_k\over \delta_{n_k}}z = y$ set $\widetilde{w}_{k}(y)=w_{k}\left({{\delta_{n_k}} \over r_k} y\right)$, and used the fact that
$(\widetilde{w}_k)' (y) =  \delta_{n_k} w'_k(y)/r_k$.\\

\emph{Step 4.2: the limiting configuration.}
Thanks to Step 2, we have that
\[
\widetilde{w}_k\left( -1-{\delta_{n_k}\over r_k}s_{n_k} \right) = z_1,\quad\quad
\widetilde{w}_k\left( 1-{\delta_{n_k}\over r_k}s_{n_k} \right) = z_2,
\]
for all $k\in\N$.
Therefore, we can define the function $f_k\in H^1(\R;\R^M)$ as
\[
f_k(z)\coloneqq
\left\{
\begin{array}{ll}
z_1 & \text{ if } z\leq -1-{\delta_{n_k}\over r_k}s_{n_k},\\
& \\
\widetilde{w}_{k}(z) & \text{ if } z\in \left[-1-{\delta_{n_k}\over r_k}s_{n_k}, 1-{\delta_{n_k}\over r_k}s_{n_k}  \right],\\
& \\
z_2 & \text{ if } z\geq 1-{\delta_{n_k}\over r_k}s_{n_k}.
\end{array}
\right.
\]
We now claim that there exists $f\in \mathcal {A}_{1,2}$ such that, up to a subsequence (not relabeled), $f_k\to f$ strongly in $L^1_{\mathrm{loc}}(\R;\R^M)$.

Indeed, for $L\in\N$, consider the bounded interval $(-L,L)\in\R$.
Then, there exists $\overline{k}\in\N$ such that
\[
(-L,L)\subset {r_k\over \delta_{n_k}}Q_k
\]
for all $k\geq\overline{k}$.
In particular, from \eqref{eq:liminf_5},  \eqref{eq:liminf_4}, \eqref{eq:liminf_3}, \eqref{eq:liminf_2}, \eqref{eq:liminf_1}, and \eqref{eq:liminf=lim}, we get that
\[
\sup_{k\in\N} \int_{(-L,L)} \left[{\delta_{n_k}\over \e_{n_k}}\widetilde{W}_\eta\left(z, f_{k}(z)   \right)+ {\e_{n_k}\over \delta_{n_k}} |f'_k (z)|^2\right]\mathrm{d}z < \infty.
\]
Recalling \eqref{eq:diagonal} and using the compactness part of Theorem \ref{thm:mm_x}, we have that, up to a subsequence (not relabeled), $(f_k)_k$ converges to some $f^L\in BV((-L,L);\{z_1,\dots, z_k\})$.
Using a diagonal argument, we get that there exists $f\in BV_{\mathrm{loc}}(\R;\{z_1,\dots, z_k\})$ such that, up to a not relabeled subsequence, 
$(f_k)_k$ converges to $f$ strongly in $L^1_{\mathrm{loc}}(\R;\R^M)$.

Now, using the $\Gamma$-convergence part of Theorem \ref{thm:mm_x} and \eqref{eq:liminf=lim}, we get that for each $L\in\N$, we have that
\[
\Lambda > \int_{J_f\cap (-L,L)} \mathrm{d}_{\widetilde{W}^\eta}(x;f^-(x),f^+(x)) \dif \ho(x) > C \ho(J_f\cap (-L,L)).
\]
The last step follows from (i) of Step 3. In particular, the constant $C>0$ is independent of $L$.
Therefore, we get that $f$ has a finite number of jump points in the entire $\R$.
In particular, since each $f_k$ is equal to $z_1$ in $(-\infty, a_k)$ and equal to $z_2$ in $(b_k,+\infty)$, for some $a_k<b_k$, we get that $f\in \mathcal {A}_{1,2}$.
This proves the claim.\\

\emph{Step 4.3: getting the approximate estimate.} Using \eqref{eq:blow_up}, \eqref{eq:liminf_1}, \eqref{eq:liminf_2}, \eqref{eq:liminf_3}, \eqref{eq:liminf_4}, \eqref{eq:liminf_5}, we obtain
\[
{d\mu\over d\mathcal{H}^0 \restr J_u}(x_0) \geq 
    \int_{J_f} \mathrm{d}_{\widetilde{W}^\eta}(x;f^-(x),f^+(x)) \dif \ho(x)
    \geq \widetilde{\sigma}^\eta_{12},
\]
where the last step follows from the definition of $\widetilde{\sigma}^\eta_{1,2}$ and Step 4.2.\\

\emph{Step 5: Conclusion.}
We claim that
\[
\lim_{\eta\to0} \widetilde{\sigma}^\eta_{12} = \sigma_{12}.
\]

First of all, since by definition $\widetilde{W}_\eta\leq W$ for all $\eta>0$, we get that
\[
\widetilde{\sigma}^\eta_{12} \leq\sigma_{12}.
\]
In order to prove the opposite inequality, we argue as follows.
Fix $\eta>0$. Let $w\in \mathcal{A}_{12}$ be such that
\begin{equation}\label{eq:liminf_dW}
\int_{J_w} \mathrm{d}_{\widetilde{W}^\eta}(x; w^{-}(x), w^{+}(x)) \dif{\mathcal H}^0(x) \leq \widetilde{\sigma}^\eta_{12}  + \eta.
\end{equation}
We claim that, without loss of generality, we can assume that
\begin{equation}\label{eq:number_jumps}
\ho(J_w) \leq k-1.
\end{equation}
Indeed, let $w\in\mathcal{A}_{12}$ violating the above bound. Then, there exist $x_1< x_2\in\R$ and $i\neq j\in\{1,\dots,k\}$ such that
\[
w^-(x_1) = z_i,\quad\quad
w^+(x_1) = w^-(x_2) = z_j,\quad\quad
w^+(x_2) z_i.
\]
Then, by considering $\widetilde{w}\in\mathcal{A}_{12}$ defined as
\[
\widetilde{w}(z)\coloneqq\left\{
\begin{array}{ll}
w(z) & \text{ if } x\in \R\setminus (x_1,x_2),\\
z_i & \text{ if } x\in [x_1,x_2],
\end{array}
\right.
\]
we get that 
\[
\int_{J_{\widetilde{w}}} \mathrm{d}_{\widetilde{W}^\eta}(x; \widetilde{w}^{-}(x), \widetilde{w}^{+}(x)) \dif{\mathcal H}^0(x) 
    < \int_{J_w} \mathrm{d}_{\widetilde{W}^\eta}(x; w^{-}(x), w^{+}(x)) \dif{\mathcal H}^0(x).
\]
Thus, we will assume that \eqref{eq:number_jumps} holds.

Now, for each $x\in J_w$, we claim that
\begin{equation}\label{eq:claim:dW}
\dw(x; w^{-}(x), w^{+}(x)) \leq \mathrm{d}_{\widetilde{W}^\eta}(x; w^{-}(x), w^{+}(x)) + 2L \sqrt{\eta},
\end{equation}
where $L>0$ is the constant given by Lemma \ref{lem:geodesics} relative to $\max\{|z_i| : i=1,\dots, k \}$.
Note that such a constant is independent of $\eta$.
In order to prove the validity of \eqref{eq:claim:dW}, thanks to Lemma \ref{lem:geodesics}, there exists $\gamma\in W^{1,1}([-1,1];\R^M)$ such that $\gamma(-1)= w^{-}(x)$, $\gamma(1)= w^{+}(x)$ be such that
\begin{equation}\label{eq:est:gamma_liminf}
2\int_{-1}^1 \sqrt{\widetilde{W}^\eta(x,\gamma(t))}|\gamma'(t)|\, \dif t = \mathrm{d}_{\widetilde{W}^\eta}(x; w^{-}(x), w^{+}(x)).
\end{equation}
Now, we write
\begin{align*}
2\int_{-1}^1 \sqrt{W(x,\gamma(t))}|\gamma'(t)|\, \dif t 
    &\leq 2\int_{-1}^1 \sqrt{\widetilde{W}^\eta(x,\gamma(t))}|\gamma'(t)|\, \dif t
        + 2\int_{-1}^1 \left| \sqrt{\widetilde{W}_\eta(x,\gamma(t))} - \sqrt{W(x,\gamma(t))} \right|  \,|\gamma'(t)|\, \dif t \\
    &\leq 2\int_{-1}^1 \sqrt{\widetilde{W}^\eta(x,\gamma(t))}|\gamma'(t)|\, \dif t
        + 2 \sqrt{\eta} \int_{-1}^1 |\gamma'(t)|\, \dif t \\
    &\leq \mathrm{d}_{\widetilde{W}^\eta}(x; w^{-}(x), w^{+}(x)) + 2 \sqrt{\eta} L,
\end{align*}
where the previous to last step follows from Lemma \ref{lem:sqrtW_Sap} , while last step by using Lemma \ref{lem:geodesics} and \eqref{eq:est:gamma_liminf}.
This gives the desired claim.

Now, from \eqref{eq:liminf_dW} and \eqref{eq:claim:dW}, we get that
\begin{align*}
\sigma_{12}&\leq \int_{J_w} \dw(x; w^{-}(x), w^{+}(x)) \dif{\mathcal H}^0(x) \\
&\leq \int_{J_w} \mathrm{d}_{\widetilde{W}^\eta}(x; w^{-}(x), w^{+}(x)) \dif{\mathcal H}^0(x)
        + 2\ho(J_w)\sqrt{\eta}L \\
&\leq \widetilde{\sigma}^\eta_{12}  + \eta+ 2\ho(J_w)\sqrt{\eta}L.
\end{align*}
Since $\eta>0$ is arbitrary, taking into consideration \eqref{eq:number_jumps}, we get the validity of the opposite inequality.

This concludes the proof.
\end{proof}


\section{Limsup inequality}
\label{sect:limsup}

This section is fully devoted to proving that that lower bound obtained in the previous section is actually optimal.
Namely, we will prove the following result.

\begin{prop}\label{prop:limsup}
Assume $(\e_n)_n, (\delta_n)_n\subset(0,\infty)$ infinitesimal sequences such that
\[
\lim_{n\to\infty}\frac{\varepsilon_n}{\delta_n}=0.
\]
Assume that {\rm \ref{H1}}-{\rm \ref{H5}} hold.
Let $u\in BV(\Omega;\{z_1, \dots, z_k\})$.
Then, there exists a sequence $(u_n)_n\subset H^1(\Omega;\R^M)$ such that
\[
\limsup_{n\to\infty} F_{\e_n,\delta_n}(u_n) \leq F_{0,0}(u),
\]
and with $u_n\to u$ with respect to the strong  $L^1(\Omega;\R^M)$ topology.
\end{prop}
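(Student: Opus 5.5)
Since $u\in BV(\Omega;\{z_1,\dots,z_k\})$ has finitely many jump points $a<x_1<\dots<x_m<b$, the transition at each jump point can be built independently. Fix one jump point $x_0$ with $u^-(x_0)=z_i$, $u^+(x_0)=z_j$, and fix $\eta>0$. By definition of $\sigma_{ij}$ we pick $w\in\mathcal{A}_{i,j}$ with
\[
\int_{J_w}\dw(x;w^-(x),w^+(x))\dif\ho(x)\le\sigma_{ij}+\eta .
\]
The reduction used in Step 5 of Proposition \ref{prop:liminf} lets us assume $J_w=\{y_1<\dots<y_N\}$ with $N\le k-1$. The plan is to place a rescaled copy of $w$ at scale $\delta_n$ near $x_0$. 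We cannot simply center it at $x_0/\delta_n$, since $W(x_0/\delta_n+\cdot,\cdot)$ is not a translate of $W$. Instead we use \ref{H1} together with Lemma \ref{lem:sqrtW_Sap} and Lemma \ref{lem:dW_Sap}. Take a relatively dense set of $\eta$-almost periods $\{\tau\}$, with inclusion length $\ell_\eta$, valid for $\sqrt W$ on the compact set $\overline{B_L(0)}$ of Lemma \ref{lem:geodesics}. Then choose $\tau_n$ with $|x_0/\delta_n-\tau_n|\le\ell_\eta$, and set $c_n:=\delta_n\tau_n$, so that $|c_n-x_0|\le\ell_\eta\delta_n\to0$. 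The jumps of the approximating profile will sit at the points $c_n+\delta_n y_l$, where $W((c_n+\delta_n y_l)/\delta_n,\cdot)=W(\tau_n+y_l,\cdot)$ is $\eta$-close to $W(y_l,\cdot)$ uniformly on $\overline{B_L(0)}$.

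Near each point $x^n_l:=c_n+\delta_n y_l$ we insert a Modica-type optimal profile. Let $\gamma_l$ be a geodesic for $\dw(y_l;w^-(y_l),w^+(y_l))$, given by Lemma \ref{lem:geodesics}. It has length at most $L$ and values in $B_L(0)$, and by the remark after Lemma \ref{lemma:reparameterization} we may take it $C^1$ with nonvanishing derivative. Apply Lemma \ref{lemma:reparameterization} with $\varepsilon=\varepsilon_n$, a fixed small $\lambda>0$, the frozen potential $W(\tau_n+y_l,\cdot)$, and the curve $\gamma_l$. This yields a transition layer of width $\nu_n\le C\varepsilon_n$ whose energy, computed with the frozen potential, is at most $2\int\sqrt{W(\tau_n+y_l,\gamma_l)}|\gamma_l'|+2\sqrt\lambda L$. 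By the almost-period estimate of Lemma \ref{lem:dW_Sap} this is at most $\dw(y_l;\cdot,\cdot)+2\eta L+2\sqrt\lambda L$. Outside the layers the sequence $u_n$ is set equal to the constant value of the well, so it contributes zero energy. Since $\varepsilon_n\ll\delta_n$, the $N$ layers of width $O(\varepsilon_n)$ are disjoint for $n$ large, because consecutive $x^n_l$ are $\delta_n\min_l(y_{l+1}-y_l)$ apart. The whole construction lives in an $O(\delta_n)$ neighborhood of $x_0$, so $u_n\to u$ in $L^1$.

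The main obstacle is the error from freezing the spatial variable. Inside a layer, $x$ ranges over an interval of length $O(\varepsilon_n)$, so $x/\delta_n$ differs from $\tau_n+y_l$ by $O(\varepsilon_n/\delta_n)\to0$. Lemma \ref{lem:unif_cont}, applied on $\R\times\overline{B_L(0)}$, therefore gives $\sup|W(x/\delta_n,p)-W(\tau_n+y_l,p)|=:\omega_n\to0$. Since the layer is traversed in a set of measure $O(\varepsilon_n)$, the term $\varepsilon_n^{-1}\int W$ changes by at most $\omega_n\cdot O(\varepsilon_n)/\varepsilon_n\to0$. For this it matters that Lemma \ref{lemma:reparameterization} gives $\nu\le C\varepsilon_n$ uniformly, using the bounded length $L$ and the fixed $\lambda$. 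Summing over the $N\le k-1$ jumps and over the jump points of $u$ gives
\[
\limsup_n F_{\varepsilon_n,\delta_n}(u_n)\le F_{0,0}(u)+C(\eta+\sqrt\lambda),
\]
with $C$ depending only on $k$, $L$ and $\ho(J_u)$. We then let $\lambda,\eta\to0$ and conclude with a diagonal argument, using that the $\Gamma$-limsup is lower semicontinuous.
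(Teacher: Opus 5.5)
Your argument is correct. It uses the same mechanism as the paper: choose an almost period $\tau_n$ with $|x_0/\delta_n-\tau_n|\le\ell$, insert Modica's reparametrized profile of width $O(\varepsilon_n)$ at a point whose fast variable is $\tau_n$ plus a prescribed offset, and use $\varepsilon_n\ll\delta_n$ to freeze the spatial variable on the layer. The real difference is which competitor you reproduce.

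The paper picks a single $\bar y$ with $\dw(\bar y;z_1,z_2)\le\sigma_{12}+\lambda$ and builds one transition. As written, this only gives the upper bound $\inf_{y}\dw(y;z_i,z_j)$. That equals $\sigma_{ij}$ when $k=2$, but it can be strictly larger when $k\ge3$, because an almost-minimizer in $\mathcal{A}_{i,j}$ may pass through intermediate wells at different points. For instance, take $M=1$, wells $0,1,2$, and $W$ with $\dw(y;0,1)=c(1+\alpha\cos 2\pi y)$ and $\dw(y;1,2)=c(1-\alpha\cos 2\pi y)$, $0<\alpha<1$. Then $\dw(y;0,2)\equiv 2c$, while jumping $0\to1$ at $y=1/2$ and $1\to2$ at $y=1$ costs $2c(1-\alpha)$.

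You instead take $w$ with jumps $y_1<\dots<y_N$ and place one profile at each $\delta_n(\tau_n+y_l)$, with the same $\tau_n$ for all $l$. Consecutive layers are $\delta_n(y_{l+1}-y_l)\gg\varepsilon_n$ apart, so they do not interact. You therefore recover the actual $\sigma_{ij}$, including the splitting phenomenon the paper advertises and that its liminf bound permits. Your proof covers a case that the paper's Step 1 silently excludes.

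Your other departure is minor: you handle the freezing error by uniform continuity over the $O(\varepsilon_n/\delta_n)$ shift and apply the almost period at the level of the curve integral. The paper instead uses the pointwise bound $|W(x/\delta_n,u_n)-W(\bar y,u_n)|\le 2\lambda$ and absorbs $2\lambda\nu_n/\varepsilon_n\le 2\sqrt\lambda L$. The two are equivalent reorganizations.

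Three small points to tidy up:
\begin{enumerate}
\item The reduction to $N\le k-1$ is valid by pigeonhole: excise the part of $w$ between two intervals carrying the same well. Those need not be two consecutive jumps, as the paper's Step 5 phrases it. Alternatively, you can drop the reduction by choosing the almost-period tolerance and $\lambda$ after fixing $w$, so that $N(\eta+\sqrt\lambda)L$ is small.
\item The $C^1$ regularization of each $\gamma_l$ may leave $\overline{B_L(0)}$. Take the almost periods and the uniform continuity on a slightly larger ball.
\item Since $u$ is fixed, the final step needs only a diagonal argument, not lower semicontinuity of the $\Gamma$-limsup.
\end{enumerate}
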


\begin{proof}
Let $u\in BV(\Omega;\{z_1, \dots, z_k\})$.
Then, $u$ has only finitely many jump points $x_0<x_1<\dots<x_p\in\Omega$.
We claim that, for each $i\in\{1,\dots,p\}$ and $n\in\N$, there exist $L^i_n>0$, and $u^i_n\in H^1(x_i-L^i_n, x_i+L^i_n)$ such that the sequence $(u_n)_n\subset H^1(\Omega;\R^M)$ where
\[
u_n(x)\coloneqq u(x) \quad\text{ if } x\in \Omega\setminus \bigcup_{i=1}^p (x_i-L^i_n, x_i+L^i_n),\quad\quad\quad
u_n(x)\coloneqq u^i_n(x)  \quad\text{ if } x\in (x_i-L^i_n, x_i+L^i_n)
\]
satisfies the required property.

The way we will build the functions $u^i_n$ will only depend on $x_i$, $u^-(x_i)$ and $u^+(x_i)$.
Therefore, without loss of generality, we will provide the construction in the case $p=1$, and $u^-(x_i)=z_1$ and $u^+(x_i)=z_2$.
We divide the proof in several steps.\\

\emph{Step 1: Choice of the jump point and of the curve.}
Fix $\lambda>0$.
Let $\bar{y}\in \R$ be such that
\begin{equation}\label{eq:choice_y}
\dw(\bar{y};z_1, z_2) \leq \sigma_{12}(z_1, z_2) + \lambda.
\end{equation}
Let $\gamma\in W^{1,1}([-1,1];\R^M)$ with $\gamma(-1)=z_1$, and  $\gamma(1)=z_2$ be such that
\begin{equation}\label{eq:choice_gamma}
2\int_{-1}^1 \sqrt{W(\bar{y},\gamma(t))}|\gamma'(t)|\,\mathrm{d}t \leq \dw(\bar{y};z_1, z_2) + \lambda.
\end{equation}
Without loss of generality, we can assume that $\gamma\in C^1([-1,1];\R^M)$, and that $|\gamma'(t)|\neq 0$ for all $t\in[-1,1]$.
We extend $\gamma$ to be defined on the entire $\R$, by setting (with an abuse of notation) $\gamma(t)\coloneqq z_1$ for $t\leq -1$, and $\gamma(t)\coloneqq z_2$ for $t\geq 1$.
Let $K\subset\R^M$ be a compact set such that $\gamma(t)\in K$ for all $t\in[-1,1]$, and let
\begin{equation}\label{eq:length_L}
L\coloneqq \int_{-1}^1 |\gamma'(t)|\,\mathrm{d}t <\infty,
\end{equation}
which is finite thanks to Lemma \ref{lem:geodesics}.\\

\emph{Step 2: Construction of the recovery sequence.}
By Lemma \ref{lemma:reparameterization}, for any $n\in \mathbb{N}$, let $\nu_n>0$ be constant satisfying \eqref{eq:nu} and let $g_n\in C^1([0,\nu;_n];[-1,1])$ be the function defined in \eqref{eq:g} and corresponding to
\[
\varepsilon = \varepsilon_n,\quad\quad
\gamma \text{ as above },\quad\quad\quad
\lambda \text{ as above}.
\]

Let $\{\tau_i\}_{i\in I_\lambda}$ be the relative dense set for $W$ and the compact set $K$ from Step 1. Denote by $l>0$ its $\lambda$-uniformly almost period.
For any $n\in\N$, let $\tau_i^n \in \{\tau_i\}_{i\in I_\lambda}$ be such that
\begin{equation}\label{eq:choice_tau_i}
\left|\frac{x_0}{\delta_n} - \tau_i^n\right|  \leq l.
\end{equation}
For any $n\in\mathbb{N}$, we define $u_n\in H^1(\Omega;\R^M)$ as
    \begin{align}
        \notag
        u_n (x) \coloneqq \gamma (g_n(x-\delta_n(\overline{y} + \tau_i^n))).
    \end{align}
Note that, thanks to \eqref{eq:choice_tau_i}, we can estimate
\begin{equation}\label{eq:conv_points}
\left| \delta_n(\overline{y} + \tau_i^n) - x_0 \right|
    \leq \delta_n \left| \bar{y} - \frac{x_0}{\delta_n} + \tau_i^n  \right|
    \leq \delta_n \left(\left| \bar{y} \right| + \left|\frac{x_0}{\delta_n} - \tau_i^n\right| \right )
    \leq \delta_n (\left| \bar{y} \right| + l),
\end{equation}
which vanishes as $n\to\infty$.\\
 
\emph{Step 3: Convergence of the functions.} We are going to prove that $(u_n)_n$ strongly converges in $L^1$ to $u$. Indeed, using \eqref{eq:conv_points}, we have
     \begin{align*}
         \int_{a}^b |u_n(x) - u(x)|dx
         &= \int_{\delta_n(\overline{y}+\tau_i^n)}^{\delta_n(\overline{y}
            +\tau_i^n)+\nu_n} |u_n(x) - u(x)| \dif x
         \leq 2R \nu_n,
     \end{align*}
where $R>0$ is such that $|z_i|\leq R$ for all $i\in\{1,\dots, k\}$, and $K\subset B(0,R)$.\\

\emph{Step 4: Asymptotic upper bound of the energies.}
First of all, we note that for all
\[
x\in [\delta_n(\overline{y}+\tau_i^n), \delta_n(\overline{y}+\tau_i^n)+\nu_n],
\]
we have
\[
\frac{x}{\delta_n} - \bar{y} \in \left[ \tau_i^n, \tau_i^n + \frac{\nu_n}{\delta_n} \right]
    \subset \left[ \tau_i^n, \tau_i^n + \frac{\varepsilon_n}{\delta_n} \frac{L}{\sqrt{\lambda}} \right].
\]
Note that the length of the interval on the left-hand side vanishes as $n\to\infty$.
Therefore, using the uniform almost periodicity of the potential, together with the fact that it is uniformly continuous on $\R$, there exists $\bar{n}\in\N$ such that
\begin{align*}
\left| W\left({x\over \delta_n}; u_n(x) \right) - W\left(\overline{y}; u_n(x) \right) \right|
    &\leq \left| W\left(\overline{y} + \tau^n_i; u_n(x) \right) - W\left(\overline{y}; u_n(x) \right) \right| \\
    &\hspace{2cm } +\left| W\left({x\over \delta_n}; u_n(x) \right) - W\left(\overline{y} + \tau^n_i; u_n(x) \right) \right| \\
    &\leq 2\lambda,
\end{align*}
for all $n\geq\bar{n}$.
Using Lemma \ref{lemma:reparameterization} together with \eqref{eq:choice_y} and \eqref{eq:choice_gamma}, we have that 
    \begin{align*}
        \int_{a}^b {1\over \e_n}W\left({x\over \delta_n}; u_n(x) \right) 
            &+ \e_n |u_n'(x)|\dif x \\
        &= \int_{\delta_n(\overline{y}+\tau_i^n)}^{\delta_n(\overline{y}+\tau_i^n)+\nu_n} {1\over \e_n}W\left({x\over \delta_n}; u_n(x) \right)  + \e_n |u_n'(x)|\dif x\notag\\
         &= \int_{\delta_n(\overline{y}+\tau_i^n)}^{\delta_n(\overline{y}+\tau_i^n)+\nu_n}{1\over \e_n}\left[W\left({x\over \delta_n}; u_n(x) \right) - W\left(\overline{y}; u_n(x) \right)\right] \dif x\notag\\
        &\hspace{3cm} +\int_{\delta_n(\overline{y}+\tau_i^n)}^{\delta_n(\overline{y}+\tau_i^n)+\nu_n}{1\over \e_n}W\left(\overline{y}; u_n(x) \right)+ \e_n |u_n'(x)|\dif x\notag\\
        &\leq {2\lambda\over \e_n}\nu_n + \int_{\delta_n(\overline{y}+\tau_i^n)}^{\delta_n(\overline{y}+\tau_i^n)+\nu_n}{1\over \e_n}W\left(\overline{y}; u_n(x) \right)+ \e_n |u_n'(x)|\dif x\notag\\
        &\leq 2\sqrt{\lambda} L + \int_{\delta_n(\overline{y}+\tau_i^n)}^{\delta_n(\overline{y}+\tau_i^n)+\nu_n}{1\over \e_n}W\left(\overline{y}; u_n(x) \right)+ \e_n |u_n'(x)|\dif x\notag\\
        &\leq 4\sqrt{\lambda} L + \int_{-1}^1 2\sqrt{W(\overline{y}, \overline{\gamma}(t))}|\overline{\gamma}'(t))|\dif t\notag\\
        &\leq 4\sqrt{\lambda} L+2\lambda+\sigma_{1,2}.
    \end{align*}
Since $\lambda>0$ is arbitrary, this concludes the proof.
\end{proof}


\section{The mass-constrained case}
\label{sect:mass-constrained}

In this section, we explain how to adapt our arguments in order to prove Theorem \ref{thm:mass_constrained}.
Let us first consider the pre-compactness.
Let $(u_n)_n\subset H^1(\Omega;\R^M)$ be such that
\[
\sup_{n\in\N} F^m_{\varepsilon_n,\delta_n}(u_n) < +\infty.
\]
Using the fact that $F^m_{\varepsilon_n,\delta_n} \geq F_{\varepsilon_n,\delta_n}$, thanks to Theorem \ref{thm:compactness} we obtain the existence of a subsequence (not relabeled) that $u_n\to u$ strongly in $L^1(\Omega;\R^M)$, for some $u\in BV(\Omega;\{z_1,\dots,z_k\})$
In particular,
\[
\int_\Omega u(x) \dx = \lim_{n\to\infty} \int_\Omega u_n(x) \dx = m.
\]

We now consider the $\Gamma$-convergence result.
Fix $u\in BV(\Omega;\{z_1,\dots,z_k\})$ be such that
\[
\int_\Omega u(x) \dx = m.
\]
For the liminf inequality, let $(u_n)_n\subset H^1(\Omega;\R^M)$ be such that $u_n\to u$ in $L^1(\Omega; \R^M)$. 
Without loss of generality, we we can assume that
\[
\int_\Omega u_n(x) \dx = m.
\]
Thus, $F^m_{\varepsilon_n,\delta_n}(u_n) =  F_{\varepsilon_n,\delta_n}(u_n)$ for all $n\in\N$. Therefore, using Proposition \ref{prop:liminf}, we obtain that
\[
F^m_{0,0}(u) = F_{0,0}(u) \leq \liminf_{n\to\infty} F_{\e_n,\delta_n}(u_n) = \liminf_{n\to\infty} F^m_{\e_n,\delta_n}(u_n).
\]
We now consider the limsup inequality.
Let $(u_n)_n\subset H^1(\Omega;\R^M)$ be the sequence given by Proposition \ref{prop:limsup}.
Set
\[
\int_\Omega u_n(x) \dx = m_n.
\]
Note that, by construction of the function $u_n$, we have that
\begin{equation}\label{eq:u_n_eps_n}
|m_n-m| \leq \|u_n - u\|_{L^1} = \sum_{y\in J_u} \int_{B(y, C\varepsilon_n)} |u_n(x) - u(x)| \dx \leq C \varepsilon_n,
\end{equation}
where the last inequality follows from the fact that $u$ and $u_n$ are uniformly bounded.
We now use the strategy developed in \cite[Section 7]{CriPig} to adjust the mass constraint.
Let $y \in \Omega \setminus J_u$.
Let $i\in\{1,\dots,k\}$ be such that $u(y) = z_i$.
Fix $r > 0$ small enough such that $u_n \equiv z_i$ on $B \coloneqq B(y,r)$.
Let $\phi \in C_c^\infty (B(y,r); [0,1])$ be such that
\begin{equation}\label{eq:int_phi}
\int_{B(y,r)} \phi(x) \dx = 1.
\end{equation}
Define
\[
v_n \coloneqq u_n + w_n,
\]
where
\[
w_n (x) \coloneqq (m - m_n) \phi (x).
\]
First of all, we note that $v_n \in H^1(\Omega; \R^M)$.
Moreover, using \eqref{eq:int_phi}, we get that
\[
\int_\Omega v_n \dx = \int_\Omega u_n \dx + \int_\Omega w_n \dx = m_n + (m - m_n) \int_B \phi \dx = m.
\]
We now consider the convergence in configuration. Thanks to \eqref{eq:u_n_eps_n}, it holds that
\[
    \| v_n - u \|_{L^1} \leq \| u - u_n \|_{L^1} + \| v_n - u_n \|_{L^1} = \| u - u_n \|_{L^1} + | m_n - m| \leq \| u - u_n \|_{L^1} + C \varepsilon_n,
\]
which vanishes as $n\to\infty$.
We now show that we have convergence of the energy.
Note that
\[
|w'_n(x)| = |m-m_n|\,|\phi'(x)|.
\]
Thus, we have that
\begin{align*}
|F_{\e_n,\delta_n}(u_n) - F_{\e_n,\delta_n}(v_n)|
    &= \int_{B(y,r)} \left[ \frac{1}{\e_n} W\left(\frac{x}{\delta_n}, v_n(x) \right) + \e_n |v'_n(x)|^2 \right] \dx \\
    &\leq \int_{B(y,r)} \left[ \frac{1}{\e_n} \theta (w_n(x))|w_n(x)| + \e_n |m-m_n|^2 |\phi'(x)|^2 \right] \dx \\
    &\leq C \theta(|m_n-m|) + \e_n |m-m_n|^2 \int_{B(y,r)} |\phi'(x)|^2 \dx,
\end{align*}
which vanishes as $n\to\infty$.
Note that, in the previous to last step we used \ref{H7}.
This gives the desired conclusion.


\subsection*{Acknowledgment}
RC was partially supported under NWO-OCENW.M.21.336, MATHEMAMI - Mathematical Analysis of phase Transitions in HEterogeneous MAterials with Materials Inclusions.
LD acknowledges support of the Austrian Science Fund (FWF) projects 10.55776/ESP1887024.


\bibliographystyle{siam}
\bibliography{bibliography}

\end{document}